\shorttitle}
\@nx\MakeUppercase{\the\toks@}}
\patchcmd\@settitle{\uppercasenonmath\@title}{\Large}{}{}
\authors}
\newtheorem{theorem}{Theorem}
\newtheorem{corollary}[theorem]{Corollary}
\newtheorem{definition}[theorem]{Definition}
\newtheorem{example}[theorem]{Example}
\newtheorem{lemma}[theorem]{Lemma}
\newtheorem{proposition}[theorem]{Proposition}
\newtheorem{remark}[theorem]{Remark}
\begin{document}
\keywords{$q$-joint numerical range, $q$-joint numerical radius, operator norm, positive operator, $A,q$-numerical range}
\subjclass[2020]{47A63, 47A12, 47A05, 47A30.}
\title[Joint $q$-Numerical Ranges of Operators in Hilbert and Semi-Hilbert Spaces]
{Joint $q$-Numerical Ranges of Operators in Hilbert and Semi-Hilbert Spaces}

\author[K. Feki, A. Patra, J. Rani and Z. Taki] {{ \large Kais Feki}$^{1}$, {\large Arnab Patra}$^{2}$, {\large Jyoti Rani}$^{3}$ and {\large Zakaria Taki}$^{4}$}

\address{$^{[1]}$ Department of Mathematics, Faculty of Science and Arts, Najran University, Najran $66462$, Kingdom of Saudi Arabia.}
\email{\url{kfeki@nu.edu.sa}}
\address{$^{[2]}$ Department of Mathematics, Indian Institute of Technology Bhilai, Bhilai 491002, India}
\email{\url{arnabp@iitbhilai.ac.in}}

\address{$^{[3]}$ Department of mathematics, Indian Institute of Technology Bhilai , Bhilai 491002, India.
}
\email{\url{jyotir@iitbhilai.ac.in}}
\address{$^{[4]}$ Department of Mathematics,
		Faculy of Sciences-Semlalia,
		University Cadi Ayyad,
		Av. Prince My. Abdellah, BP: 2390,
		Marrakesh (40.000-Marrakech), Morocco (Maroc)}
\email{\url{zakaria.taki2016@gmail.com}}


\date{\today}

\maketitle

\begin{abstract}
This paper introduces and investigates the concept of the $q$-numerical range for tuples of bounded linear operators in Hilbert spaces. We establish various inequalities concerning the $q$-numerical radius associated with these operator tuples. Furthermore, we extend our study to define the $q$-numerical range in semi-Hilbert spaces and provide a proof of its convexity. Additionally, we explore several related results in this context.
\end{abstract}

\section{Introduction and Preliminaries}
The numerical range plays a significant role in understanding the behavior and characteristics of linear operators and matrices. It finds applications in various fields such as stability analysis of dynamical systems, optimization problems, control theory, and quantum mechanics. Apart from the classical numerical range, there are several extensions, including the algebraic numerical range, $q$-numerical range, and joint numerical range. To gain a more comprehensive understanding of these concepts, we recommend referring to the following sources: \cite{baklouti2018joint,buoni1978joint,gustafson1997numerical,gau2021numerical,moore1969adjoints,chan2018note,cho1981boundary}.

Let $\mathbb{N}^*$ denote the set of all positive integers. For $d \in \mathbb{N}^*$, we denote by $\mathbb{C}^d$ the set of $d$-tuples of complex numbers. For a vector $\xi = (\xi_1, \dots, \xi_d) \in \mathbb{C}^d$, we define $\overline{\xi} = \left(\overline{\xi_1}, \dots, \overline{\xi_d}\right)$ as the vector formed by taking the conjugate of each component of $\xi$. The infinity norm and the Euclidean norm of a vector $\xi = (\xi_1, \dots, \xi_d) \in \mathbb{C}^d$ are defined respectively as:
\begin{align*}
\Vert \mathbf{\xi} \Vert_\infty:=\max\limits_{1\leq i \leq d }\vert{\xi_i}\vert \  \textup{ and } \  \| \mathbf{\xi} \|_2:=\left(\displaystyle \sum_{i=1}^{d}|\xi_i|^{2}\right)^{\frac{1}{2}}.
\end{align*}
It is widely known that for any $\xi \in \mathbb{C}^{d},$ the following inequalities hold
	\begin{equation}\label{eq1}
		\Vert \mathbf{\xi} \Vert_\infty \leq \Vert \mathbf{\xi} \Vert_2 \leq \sqrt{d}\Vert \mathbf{\xi} \Vert_\infty.
	\end{equation} 

Let $\mathcal{B(H)}$ be the $C^*$-algebra of all bounded linear operators acting on a complex Hilbert space $(\mathcal{H},$ $\langle \cdot, \cdot \rangle)$. The identity operator on $\mathcal{H}$ is denoted by $I$, and the norm resulting from the inner product $\langle \cdot \rangle$, is denoted by $\Vert \cdot \Vert$.  The adjoint, real part, imaginary part, range, and null space of an operator $T\in \mathcal{B(H)}$ are denoted, respectively, by $T^*,$ $\Re(T),$ $\Im(T),$ $\mathcal{R}(T)$, and $\mathcal{N}(T)$. Note that $\Im(T)=\frac{T-T^*}{2i}$ and $\Re(T)=\frac{T+T^*}{2}$.

 For a $d$-tuple $\mathbf{T}=(T_1, \dots, T_d)$ of elements of $\mathcal{B(H)},$ we use the following   notations  $\mathbf{T}^{*},$  $ \Re(\mathbf{T}),$  $\Im(\mathbf{T}),$  $\mathbf{T}x$ and $\langle \mathbf{T} x,y \rangle$  to denote respectively  the vectors  $(T_{1}^{*}, \dots, T_{d}^{*}),$ $ \big(\Re(T_1),\dots, \Re(T_d) \big),$ $ \big(\Im(T_1),\dots, \Im(T_d)\big),$ $\big(T_1x, \dots,T_dx\big)$ and  \\ $\big(\langle T_1 x,y \rangle, \dots,\langle T_d x,y \rangle\big),$ where $x,y \in \mathcal{H}$.  We also use the symbols  $\| \mathbf{T}x\|$  and  $\| \langle \mathbf{T} x,y \rangle\| $  to designate respectively the quantities  $\big \Vert  \big(\|T_1x\|,\dots,\|T_d x\| \big) \big \Vert_{2} $ and $
\big\| \big(\langle T_1 x,y \rangle, \dots ,\langle T_d x,y \rangle \big)\big\|_2$ for every $x,y \in \mathcal{H}$.

	The set of all positive operators acting on $\mathcal{H},$ denoted by $\mathcal{B(H)}^{+},$ is given by
\begin{align*}
\mathcal{B(H)}^{+}:=\{ T\in \mathcal{B(H)} : \langle Tx, x \rangle \geq 0 \ \text{ for all } \ x\in \mathcal{H} \}.
\end{align*}	
For a positive operator $T \in \mathcal{B(H)}^{+},$ we designate its square root by $T^{\frac{1}{2}}$ its square root.
	
Recall that  an operator $T \in \mathcal{B(H)}$ is said to be unitary(respectively self-adjoint) if $T^{*}T=TT^{*}=I$(respectively $T^*=T$).
	
	 For an operator $T \in \mathcal{B(H)}$, the numerical range $W(T)$, the  numerical radius $w(T)$ and the  operator norm $\|T\|$ of $T$, are defined, respectively, by:
\begin{align*}
W(T)&:= \{\langle Tx,x \rangle  : x \in \mathbb{S}^1\}, \\
\omega(T)& := \sup\{| \langle Tx,x \rangle | : x \in \mathbb{S}^1\}
\end{align*} 
and
\begin{align*}
\|T\|:= \sup\{| \langle Tx,y \rangle | : x,y \in \mathbb{S}^1\},
\end{align*}
where $\mathbb{S}^1$ denotes the unit circle in $\mathcal{H}$, consisting of all vectors $x \in \mathcal{H}$ with $\|x\|=1$.

	It is well known that $w(\cdot)$ defines a norm on $\mathcal{B(H)}$, which is equivalent to the usual operator norm $\| \cdot \|$. In fact, for every $T \in \mathcal{B(H)}$, we have the following inequalities
	\begin{equation*}
		\frac{\|T\|}{2} \le w(T) \le \|T\|.
	\end{equation*}

The classical numerical range has been extended in several directions. Among these extensions we cite the joint numerical range, the $q$-numerical range and the $A$-numerical range.   Over the years, these   sets have been studied extensively by many researchers, see for instance \cite{li1998some, chien2007q,zamani2019numerical,feki2020tuples}.

	Recall that   the joint numerical range 	$JtW(\mathbf{T})$, joint numerical radius $Jt\omega(\mathbf{T})$ and norm $\Vert{\mathbf{T}}\Vert$ of a $d$-tuple $\mathbf{T}=(T_1, \dots, T_d)$ of elements of $\mathcal{B(H)}$   are  respectively defined as follows:
\begin{align*}
	JtW(\mathbf{T})&:=\{ \langle \mathbf{T}x, x\rangle :  x \in \mathbb{S}^1\}, \\
	Jt\omega(\mathbf{T})& :=\sup\{\Vert \langle \mathbf{T}x, x\rangle  \Vert_2 : x \in \mathbb{S}^1\}=\sup_{x \in \mathbb{S}^1}\left\lbrace \left(\sum_{i=1}^d| \langle T_ix,x \rangle|^2\right)^\frac{1}{2} \right\rbrace,
\end{align*}
and
\begin{equation*}
	\Vert{\mathbf{T}}\Vert:=\sup\{\Vert  \mathbf{T}x \Vert_2 :  x \in \mathbb{S}^1\}=\sup_{x \in \mathbb{S}^1}\left\lbrace \left(\sum_{i=1}^d\|T_ix \|^2\right)^\frac{1}{2} \right\rbrace.	
\end{equation*}

The convexity of $JtW(\mathbf{T})$ is not assured in general; see \cite[p.359]{gau2021numerical}. Later studies have investigated the specific conditions under which $JtW(\mathbf{T})$ can be convex. A sufficient condition was given in \cite[p.360]{gau2021numerical}: $JtW(\mathbf{T})$ is convex if either $\dim \mathbf{Sp}\{T_1, T_2, \ldots, T_d, I\} \le 3$ or $\dim(\mathcal{H}) > 3$ and $\dim \mathbf{Sp}\{T_1, T_2, \ldots, T_d, I\} \le 4$. Here, $\mathbf{Sp}M$ refers to the space spanned by $M$.

It has also been shown that for a $d$-tuple of self-adjoint operators $\mathbf{T} = (T_1, \ldots, T_d)$ on an infinite-dimensional space, the joint numerical range $JtW(\mathbf{T})$ forms a star-shaped set \cite{li2009joint}. Further studies can be found in \cite{li2000convexity, gutkin2004convexity, li2009joint, baklouti2018joint}.

In everything that follows, let $q \in \mathbb{C}$ with $|q| \leq 1$ and $\dim \mathcal{H} \geq 2$. Recall that the $q$-numerical range $W_q(T)$ and $q$-numerical radius $\omega_q(T)$ of an operator $T \in \mathcal{B(H)}$ are defined as follows:
\begin{align*}
W_q(T) & := \{\langle Tx, y \rangle : (x, y) \in \mathbb{S}_q(\mathcal{H})\},
\end{align*}
and
\begin{align*}
\omega_q(T) := \sup\{|\langle Tx, y \rangle| : (x, y) \in \mathbb{S}_q(\mathcal{H})\},
\end{align*}
where $\mathbb{S}_q(\mathcal{H})$ is a subset of $\mathcal{H} \times \mathcal{H}$ defined by:
\begin{equation*}
\mathbb{S}_q(\mathcal{H}) := \{(x, y) \in \mathcal{H} \times \mathcal{H} : \|x\| = \|y\| = 1 \textup{ and } \langle x, y \rangle = q\}.
\end{equation*}
It is well-known that $W_q(T)$ is a non-empty subset of $\mathbb{C}$; see \cite{li1994generalized}. Additionally, Tsing proved in \cite{r.tsing} that $W_q(T)$ is always convex. He also established that $W_q(T)$ satisfies the following equality:
\begin{equation*}
W_q(T) = \{z \in \mathbb{C} : |z - \langle Tx, x \rangle| \leq \sqrt{1 - |q|^2} \left(\|Tx\|^2 - |\langle Tx, x \rangle|^2\right)^\frac{1}{2} \textup{ for some } x \in \mathcal{H}\}.
\end{equation*}
It is easy to verify that $W_q(U^*TU) = W_q(T)$ for any unitary operator $U$ on $\mathcal{H}$, and $W_{\mu q}(T) = \mu W_q(T)$ for any $\mu \in \mathbb{C}$ with $|\mu| = 1$.

One of the main goals of this work is to introduce an extension of the $q$-numerical range to a multivariable setting. Specifically, we will define the joint $q$-numerical range for a $d$-tuple of operators $\mathbf{T}=(T_1, T_2, \dots, T_d)$ acting on $\mathcal{B(H)}^d$, denoted as $JtW_q(\mathbf{T})$. In this paper, our objective is to investigate the convexity of $JtW_q(\mathbf{T})$. Additionally, we establish connections between the joint eigen-spectrum, joint approximate spectrum, and the joint $q$-numerical range. Moreover, we examine the relationship between the joint $C$-numerical range and the joint $q$-numerical range. We also establish various inequalities concerning the $q$-numerical radius associated with these operator tuples. Moreover, our research delves into defining the $q$-numerical range in semi-Hilbert spaces, which are Hilbert spaces augmented with a semi-inner product structure induced by a positive operator $A$ on $\mathcal{B(H)}$. We also prove its convexity and explore related results in this setting.

\section{Joint $q$-numerical range and radius of operators}
In this section, we aim introduce and  investihate the concept of the $q$-numerical range for tuples of bounded linear operators in Hilbert spaces. 
We then also introduce the so-called joint  $q$-numerical radius  associated with these operator tuples and 
We establish various inequalities concerning it. Let start with the following definition which is a extension of the $q$-numerical range in single variable operator theorry.
\begin{definition}
Let $\mathbf{T} = (T_1, T_2, \dots, T_d)\in \mathcal{B}(\mathcal{H})^d= \underbrace{\mathcal{B}(\mathcal{H}) \times \mathcal{B}(\mathcal{H}) \times \cdots \times \mathcal{B}(\mathcal{H})}_{d\text{-times}}
	$. The joint $q$-numerical range of $\mathbf{T}$ is defined as follows:
\begin{equation*}
JtW_q(\mathbf{T}) := \left\{ \left(\langle T_1 x, y \rangle, \dots, \langle T_d x, y \rangle \right) \in \mathbb{C}^d : (x, y) \in \mathbb{S}_q(\mathcal{H}) \right\}.
\end{equation*}
\end{definition}
It should be noted here that When $q = 1$, the joint $q$-numerical range simplifies to the classical joint numerical range. Hence, the concept of joint $q$-numerical range provides a valuable generalization of the classical joint numerical range.

It is clear that $JtW_q(\mathbf{T})$ is a non-empty, bounded subset of $\mathbb{C}^d$. However, it is generally not closed unless $\mathcal{H}$ is a finite-dimensional space. Additionally, a straightforward calculation shows that the joint $q$-numerical range satisfies the following properties:
\begin{align*}
JtW_{e^{i\theta} q}(\mathbf{T})&=e^{i\theta} JtW_q(\mathbf{T}),\\ 
 JtW_{\overline{q}}(\mathbf{T}^*)&=\Big\{ \overline{\xi} \in \mathbb{C}^{d} : \xi \in  JtW_q(\mathbf{T})  \Big\}
\end{align*}
  and 
\begin{align*}
JtW_q(\alpha \mathbf{T}+ \beta \mathbf{I})=\alpha JtW_q(\mathbf{T})+(\beta q,\dots,\beta q)
\end{align*}  
 for every $\alpha,\beta\in\mathbb{C}$ and every $\theta \in \mathbb{R}$, where $\mathbf{I}=(I,\dots, I).$ 

Unlike the case $d=1$, the joint $q$-numerical range $JtW_q(\mathbf{T})$ is generally not convex, as demonstrated by the following example.
\begin{example}\label{Example Intro}
  Let $z_{1}, z_{2}$ be two fixed non-zero complex numbers  and consider the following $2\times 2$ matrices  $T_1=\begin{bmatrix}
		z_1 & 0\\
		0 &0
	\end{bmatrix}$ and $T_2=\begin{bmatrix}
		0 & 0\\
		z_2 &0
	\end{bmatrix}.$  A simple calculation proves that 
		\begin{align*}
		JtW_q(T_1,T_2)=\left \lbrace \big(z_1x_1\overline{y_1},z_2x_1\overline{y_2}\big):  \sum_{i=1}^{2} \vert x_i \vert ^{2}= \sum_{i=1}^{2}  \vert y_i \vert ^{2}=1   \text{ and } x_1\overline{y_1}+ x_{2} \overline{y_2}=q \right \rbrace.	
		\end{align*}
In the case when $q \ne 0$, we have  $(0,0) \in JtW_q(T_1,T_2) $ and $(z_1q,0)\in JtW_q(T_1,T_2),$ but the convex combination $t (z_1q,0)+(1-t)(0,0)= (z_1 t q,0) \notin JtW_q(T_1,T_2),$ where $ t \in (0,1)$.  For the other case, when $q=0,$ we have    
$$(\frac{\sqrt{3}}{4}z_1,\frac{-1}{4}z_2) ,(\frac{\sqrt{3}}{4}z_1,\frac{3}{4}z_2)\in JtW_0(T_1,T_2),$$ but  
$$\frac{3}{4}(\frac{\sqrt{3}}{4}z_1,\frac{-1}{4}z_2)+ \frac{1}{4}(\frac{\sqrt{3}}{4}z_1,\frac{3}{4}z_2)=(\frac{\sqrt{3}}{4}z_1,0) \notin JtW_0(T_1,T_2).$$ 
Hence, from the previous cases, we deduce that  $JtW_q(T_1,T_2)$ is not convex.
\end{example}

Some properties of the $q$-joint numerical range are summurized in the following proposition.

\begin{proposition}\label{prop1}
	Let $\mathbf{T}=(T_1,T_2,\dots,T_d)\in \mathcal{B(H)}^d$. Then
	\begin{enumerate}
	
\item[(i)] For any unitary operator $U \in \mathcal{B(H)},$ we have    $JtW_q(\mathbf{U}^*\mathbf{T}\mathbf{U})=JtW_q(\mathbf{T}),$ where $\mathbf{U}^*\mathbf{T}\mathbf{U}$ is the $d$-tuple of operators given by: $$\mathbf{U}^*\mathbf{T}\mathbf{U}=(U^*T_1U, \dots, U^*T_dU).$$	
	\item [(ii)] $JtW_{ q}(\mathbf{T})$ is convex if and only if $JtW_{ q}(\mathbf{T},I)$ is convex.	Here $(\mathbf{T},I)$ is a $(d+1)$-tuple of operators in $\mathcal{B}(\mathcal{H})^{d+1}$.			
	\end{enumerate}
	
\end{proposition}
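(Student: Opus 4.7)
For part (i), my plan is to note that the map $\Phi \colon (x,y) \mapsto (Ux, Uy)$ is a bijection of $\mathbb{S}_q(\mathcal{H})$ onto itself whenever $U$ is unitary: indeed, $\|Ux\|=\|x\|$, $\|Uy\|=\|y\|$ and $\langle Ux, Uy\rangle = \langle x, y\rangle = q$, and $\Phi$ is invertible with inverse given by $U^{*}$. Once this is set, the identity $\langle U^{*}T_{i}U x, y\rangle = \langle T_{i}(Ux), Uy\rangle$ shows that the tuple produced by $\mathbf{U}^{*}\mathbf{T}\mathbf{U}$ at $(x,y)$ equals the tuple produced by $\mathbf{T}$ at $(Ux, Uy)$. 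Running $(x,y)$ over $\mathbb{S}_q(\mathcal{H})$ and using the bijection gives the two inclusions at once. I expect no real obstacle here; it is essentially the same argument that works in the single-operator case.

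For part (ii), my plan rests on the simple observation that the last coordinate of any element of $JtW_q(\mathbf{T},I) \subseteq \mathbb{C}^{d+1}$ is forced to be $q$, because $\langle Ix, y\rangle = \langle x, y\rangle = q$ for every $(x,y) \in \mathbb{S}_q(\mathcal{H})$. Consequently one has the set-theoretic identification
\begin{equation*}
JtW_q(\mathbf{T},I) \;=\; JtW_q(\mathbf{T}) \times \{q\},
\end{equation*}
that is, the projection $\pi \colon \mathbb{C}^{d+1} \to \mathbb{C}^{d}$ onto the first $d$ coordinates restricts to a bijection between $JtW_q(\mathbf{T},I)$ and $JtW_q(\mathbf{T})$, while its section $\xi \mapsto (\xi, q)$ takes $JtW_q(\mathbf{T})$ back into $JtW_q(\mathbf{T},I)$.

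From here the equivalence of convexities is immediate in both directions. If $JtW_q(\mathbf{T})$ is convex and $\xi, \eta \in JtW_q(\mathbf{T},I)$ with $t \in [0,1]$, then $\pi(\xi),\pi(\eta) \in JtW_q(\mathbf{T})$, so $t\pi(\xi)+(1-t)\pi(\eta) \in JtW_q(\mathbf{T})$, and since the last coordinates of $\xi$ and $\eta$ are both $q$, the combination $t\xi+(1-t)\eta$ equals $\bigl(t\pi(\xi)+(1-t)\pi(\eta),\, q\bigr)$, which sits in $JtW_q(\mathbf{T},I)$. The converse uses $\pi$ instead of the section: any convex combination in $JtW_q(\mathbf{T})$ is the image under $\pi$ of the corresponding convex combination of the lifts in $JtW_q(\mathbf{T},I)$, which stays inside $JtW_q(\mathbf{T},I)$ by hypothesis. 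I do not anticipate any difficulty; the only point to highlight carefully is that the last coordinate is constantly $q$, which makes affine combinations respect the product structure.
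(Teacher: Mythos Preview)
Your proposal is correct and follows essentially the same approach as the paper: part (i) is just the definition unpacked via the bijection $(x,y)\mapsto(Ux,Uy)$, and part (ii) rests on the identity $JtW_q(\mathbf{T},I)=JtW_q(\mathbf{T})\times\{q\}$, from which the convexity equivalence is immediate. The paper states these two observations tersely; you have simply written out the details.
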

\begin{proof}
The assertion (i) follows directly from the definition of the  joint $q$-numerical range.  For assertion (ii), it suffices to use the fact that  $$JtW_q({\mathbf{T},I})=JtW_q(\mathbf{T}) \times \{q\}.$$
\end{proof}

In the following theorem, we establish a convexity criteria for $JtW_q(\mathbf{T})$ utilizing the spanning set $\mathbf{Sp}\{T_1,  \dots, T_d\}$.

\begin{theorem}\label{THMPRIN}
Let $\mathbf{T}=(T_1,\dots,T_d)\in  \mathcal{B(H)}^d.$ Then, the following assertions are equivalent:
\begin{enumerate}
\item[(i)] $JtW_q(\mathbf{T})$ is convex,
\item[(ii)] $\mathbf{Sp}\{T_1,\dots,T_d\}$ is  generated by a family $\{ R_1,\dots, R_n\}$   $(n\in \mathbb{N}^{*})$ such that  $JtW_q(R_1,\dots,R_n)$ is convex,
\item[(iii)] for each  $S_1,\dots,S_m \in \mathbf{Sp}\{T_1,T_2,\dots,T_d\}$ $(m\in \mathbb{N}^{*}),$ the set  $JtW_q(S_1,\dots,S_m)$ is convex.
\end{enumerate}
\end{theorem}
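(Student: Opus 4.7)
My plan is to establish the cyclic chain (i) $\Rightarrow$ (iii) $\Rightarrow$ (ii) $\Rightarrow$ (i), all resting on a single elementary observation: writing one tuple of operators as linear combinations of another induces a linear map between the corresponding joint $q$-numerical ranges. Since linear images of convex sets are automatically convex, the three equivalences should fall out essentially for free.

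The key lemma I would record first reads as follows. Suppose $S_1,\dots,S_m \in \mathbf{Sp}\{T_1,\dots,T_d\}$, so that $S_j = \sum_{i=1}^d a_{ji}T_i$ for scalars $a_{ji}\in\mathbb{C}$. Let $L:\mathbb{C}^d\to\mathbb{C}^m$ denote the linear map with matrix $(a_{ji})$. Bilinearity of the inner product gives $\langle S_j x,y\rangle = \sum_{i=1}^d a_{ji}\langle T_i x,y\rangle$, so the tuple $(\langle S_jx,y\rangle)_{j=1}^m$ is precisely $L$ applied to $(\langle T_ix,y\rangle)_{i=1}^d$. Letting $(x,y)$ range over $\mathbb{S}_q(\mathcal{H})$ therefore yields the identity
$$JtW_q(S_1,\dots,S_m)\;=\;L\bigl(JtW_q(T_1,\dots,T_d)\bigr).$$

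From this identity I would extract each implication. For (i) $\Rightarrow$ (iii), given any $S_1,\dots,S_m$ in the span, the observation plus affine preservation of convexity immediately shows $JtW_q(S_1,\dots,S_m)$ is convex. For (iii) $\Rightarrow$ (ii), I would simply take the generating family $\{R_1,\dots,R_n\}$ to be $\{T_1,\dots,T_d\}$ itself — it trivially generates its own span — and invoke (iii) on this tuple. For (ii) $\Rightarrow$ (i), the hypothesis that $\{R_1,\dots,R_n\}$ generates $\mathbf{Sp}\{T_1,\dots,T_d\}$ forces each $T_i$ to lie in $\mathbf{Sp}\{R_1,\dots,R_n\}$, so the key observation applies in the reverse direction to exhibit $JtW_q(\mathbf{T})$ as the linear image of the convex set $JtW_q(R_1,\dots,R_n)$, hence convex.

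I do not foresee any serious obstacle. The proof is purely formal; the only step requiring a moment's care is verifying that the relation in the key observation is an equality rather than a one-sided inclusion, which is automatic because both sides are obtained by sweeping $(x,y)$ over the \emph{same} parameter set $\mathbb{S}_q(\mathcal{H})$. No spectral, topological, or geometric input is required beyond the definitions and the elementary fact that linear maps send convex sets to convex sets.
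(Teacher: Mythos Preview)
Your proposal is correct and follows essentially the same approach as the paper: both arguments rest on the observation that writing $S_j=\sum_i a_{ji}T_i$ forces $\langle S_jx,y\rangle=\sum_i a_{ji}\langle T_ix,y\rangle$, so the joint $q$-numerical range of the $S_j$'s is the image of $JtW_q(\mathbf{T})$ under a linear map. The paper unwinds this fact pointwise in its proof of (i)$\Rightarrow$(iii) (choosing a realizing pair $(u_t,w_t)$ for each convex combination), whereas you state it once as the identity $JtW_q(\mathbf{S})=L(JtW_q(\mathbf{T}))$ and then invoke the general principle that linear images preserve convexity; this is a slightly cleaner packaging of the same idea rather than a different argument.
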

 
\begin{proof}
Note that the implications $(iii)\Longrightarrow (ii)$, and  $(ii)\Longrightarrow (i)$ are obvious (since $T_k \in \mathbf{Sp}\{ R_1,\dots, R_n\}$ for $k=1, \dots, d).$ Therefore, it suffices to prove the implication   $(i)\Longrightarrow (iii).$

$(i)\Longrightarrow (iii).$ We assume that   $JtW_q(\mathbf{T})$ is convex. Let $S_1,\dots,S_m$ are in $\mathbf{Sp}\{T_1,\dots,T_d\},$ then for each $j\in \{ 1, \dots, m\},$ there exist $a_{j,1},  \dots, a_{j,d} \in \mathbb{C}$ such that the operator $S_j$ can be written as
\begin{equation}\label{expr2}
S_{j}=\sum_{k=1}^{d} a_{j,k} T_k.
\end{equation}
Now let   $(x_1, y_2), (x_2, y_2) \in \mathbb{S}_q(\mathcal{H})$ and  $t\in [0,1].$ It follows from  \eqref{expr2} that 
\begin{align}\label{ega3}
t \langle S_j x_1, y_1 \rangle + (1-t) \langle S_j x_2, y_2 \rangle &=\sum_{k=1}^{d}a_{j,k}\big[t \langle T_k x_1,y_1 \rangle +(1-t) \langle T_k x_2,y_2 \rangle\big]
\end{align}
for every $j\in \{1, \dots, m\}$. On the other hand, by the convexity of  $JtW_q(\mathbf{T}),$ there exists $(u_{t}, w_t) \in \mathbb{S}_q(\mathcal{H})$ such that 
\begin{align}\label{ega4}
t \langle T_k x_1,y_1 \rangle +(1-t) \langle T_k x_2,y_2 \rangle &= \langle T_k u_t, w_t \rangle \ \ (k\in \{ 1, \dots, d\}).
\end{align}
Hence, by combining \eqref{ega3} and \eqref{ega4}, we get that 
\begin{align*}
t \langle S_j x_1, y_1 \rangle + (1-t) \langle S_j x_2, y_2 \rangle &= \langle S_j u_t, w_t \rangle   \ \ (j\in \{ 1, \dots, m\}).
\end{align*}
From this we can conclude that the set  $JtW_q(S_1,\dots,S_m)$ is convex. This completes the proof.

\end{proof}

An immediate consequence of  Theorem  \ref{THMPRIN} is the following result.

\begin{corollary}
Let $T  \in \mathcal{B(H)}$. Then $JtW_q(a_1T+b_1 I,\dots,a_dT+b_dI)$ is convex, whenever   $a_i,b_i \in \mathbb{C}$ for all $i\in \{ 1, \dots, d\}$.
\end{corollary}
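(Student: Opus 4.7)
The plan is to reduce the convexity of $JtW_q(a_1T+b_1I,\dots,a_dT+b_dI)$ to the convexity of the single-variable $q$-numerical range $W_q(T)$, which is a classical result of Tsing cited in the introduction. The natural intermediate object is the pair $(T,I)$, since every operator $a_iT+b_iI$ sits in $\mathbf{Sp}\{T,I\}$.

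First I would observe that for the single operator $T$, the joint $q$-numerical range $JtW_q(T)$ coincides (by definition) with the classical $q$-numerical range $W_q(T)$, and the latter is convex by Tsing's theorem \cite{r.tsing} recalled in the preliminaries. Next, applying Proposition~\ref{prop1}(ii) with the one-tuple $\mathbf{T}=T$, the convexity of $JtW_q(T)$ is equivalent to the convexity of $JtW_q(T,I)$. Thus $JtW_q(T,I)$ is convex.

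Now I would invoke Theorem~\ref{THMPRIN} applied to the pair $(T,I)\in\mathcal{B}(\mathcal{H})^2$. Since this theorem asserts the equivalence $(i)\Leftrightarrow(iii)$, and we have just established that (i) holds for $(T,I)$, it follows that for any choice of $S_1,\dots,S_m\in\mathbf{Sp}\{T,I\}$ the set $JtW_q(S_1,\dots,S_m)$ is convex. Taking in particular $m=d$ and $S_i=a_iT+b_iI\in\mathbf{Sp}\{T,I\}$ for $i=1,\dots,d$ yields the convexity of
\begin{equation*}
JtW_q(a_1T+b_1I,\dots,a_dT+b_dI),
\end{equation*}
as desired.

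I do not anticipate any real obstacle: the argument is essentially a packaging of previously established facts. The only thing one has to be mildly careful about is recognizing that the target tuple lies in the span of $\{T,I\}$ regardless of whether the coefficients $(a_i,b_i)$ span all of $\mathbb{C}^2$, so that Theorem~\ref{THMPRIN} is applied with the base tuple $(T,I)$ rather than the target tuple itself; this avoids any issue if, for instance, all $b_i$ vanish and $I$ is not in the span of the target operators.
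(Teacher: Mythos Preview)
Your proposal is correct and follows essentially the same approach as the paper: both argue that $JtW_q(T,I)$ is convex via Proposition~\ref{prop1}(ii) and the convexity of $W_q(T)$, and then apply Theorem~\ref{THMPRIN} using that each $a_iT+b_iI$ lies in $\mathbf{Sp}\{T,I\}$. Your write-up is simply a more detailed version of the paper's two-line argument.
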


\begin{proof}
Clearly,  $a_i T +b_i I \in \mathbf{Sp}\{T, I\}$ for all $i\in \{ 1, \dots, d\}.$  On the other hand, it follows from Proposition \ref{prop1} (ii) that $JtW_q(T, I)$ is convex, since $W_q(T)$ is also convex.   Therefore, by applying Theorem \ref{THMPRIN}, we obtain the desired result.
\end{proof}

Another consequence of Theorem \ref{THMPRIN}  is the next result.

\begin{theorem}\label{THM5NEW}
Let $\mathbf{T}=(T_1,T_2,\cdots,T_d) \in \mathcal{B(H)}^d$, with $\dim \mathcal{H}=2.$ If there exists $i \in \{1, \dots, d \}$ such that $T_{i}$ is not a scalar operator and $T_{i}T_{j}=T_{j}T_{i}$ for all $j\in \{ 1, \dots, d \}$ with $j\neq i,$ then $JtW_q(\mathbf{T})$ is convex.
\end{theorem}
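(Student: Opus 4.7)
The plan is to exploit the small commutant of a non-scalar operator on a 2-dimensional Hilbert space in order to reduce the problem to the convexity of the classical $q$-numerical range $W_q(T_i)$, and then feed this into Theorem \ref{THMPRIN}. The key linear-algebra input is the following standard fact: if $\dim \mathcal{H} = 2$ and $T_i \in \mathcal{B}(\mathcal{H})$ is non-scalar, then its commutant in $\mathcal{B}(\mathcal{H})$ coincides with the two-dimensional subspace $\mathbf{Sp}\{I, T_i\}$. I would justify this by a short Jordan-form case analysis: either $T_i$ has two distinct eigenvalues (commutant = diagonal operators in the eigenbasis, which equals $\mathbf{Sp}\{I, T_i\}$), or $T_i$ is similar to a non-trivial Jordan block with a single eigenvalue (commutant = upper-triangular Toeplitz matrices, again $\mathbf{Sp}\{I, T_i\}$); the remaining scalar case is excluded by hypothesis.

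Using this, the commutativity hypothesis forces $T_j \in \mathbf{Sp}\{I, T_i\}$ for every $j \in \{1, \dots, d\}$, and consequently $\mathbf{Sp}\{T_1, \dots, T_d\} \subseteq \mathbf{Sp}\{I, T_i\}$. Two subcases then arise. If $\mathbf{Sp}\{T_1, \dots, T_d\}$ is one-dimensional, it must equal $\mathbf{Sp}\{T_i\}$ (since it contains the non-zero operator $T_i$), so $\{T_i\}$ is a generating family and $JtW_q(T_i) = W_q(T_i)$ is convex by Tsing's theorem cited in the introduction. If $\mathbf{Sp}\{T_1, \dots, T_d\}$ is two-dimensional, then by the containment above it must coincide with $\mathbf{Sp}\{I, T_i\}$, so $\{T_i, I\}$ is a generating family and the identity $JtW_q(T_i, I) = W_q(T_i) \times \{q\}$ (together with the convexity of $W_q(T_i)$, or equivalently Proposition \ref{prop1}(ii)) shows this family also has convex joint $q$-numerical range.

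In both subcases the implication (ii)$\Rightarrow$(i) of Theorem \ref{THMPRIN} delivers convexity of $JtW_q(\mathbf{T})$. As a streamlined alternative, one can bypass the case split entirely by writing $T_j = \alpha_j T_i + \beta_j I$ explicitly (with $\alpha_i = 1$, $\beta_i = 0$) and observing that for every $(x, y) \in \mathbb{S}_q(\mathcal{H})$,
\[
\langle T_j x, y \rangle = \alpha_j \langle T_i x, y \rangle + \beta_j q \qquad (j = 1, \dots, d),
\]
so that $JtW_q(\mathbf{T})$ is the image of the convex set $W_q(T_i)$ under the affine map $\mathbb{C} \to \mathbb{C}^d$, $\zeta \mapsto (\alpha_1 \zeta + \beta_1 q, \dots, \alpha_d \zeta + \beta_d q)$, and is therefore convex.

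I do not anticipate any serious obstacle. The only nontrivial ingredient is the commutant identity in dimension two, which is a routine textbook computation; once in hand, the rest is a direct application of Tsing's convexity theorem and the framework already developed in Proposition \ref{prop1} and Theorem \ref{THMPRIN}.
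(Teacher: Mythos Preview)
Your proposal is correct and follows essentially the same route as the paper: show that every $T_j$ lies in $\mathbf{Sp}\{T_i, I\}$ (the paper cites \cite{eliezer1968note} for this, while you supply the Jordan-form argument directly), then invoke the convexity of $JtW_q(T_i, I) = W_q(T_i)\times\{q\}$ via Proposition~\ref{prop1}(ii) and conclude by Theorem~\ref{THMPRIN}. Your dimension case-split is harmless but unnecessary---the paper simply takes $\{T_i, I\}$ as the generating family regardless, since the implication (ii)$\Rightarrow$(i) in Theorem~\ref{THMPRIN} only uses $T_k \in \mathbf{Sp}\{R_1,\dots,R_n\}$---and your alternative affine-image argument is a pleasant self-contained shortcut that the paper does not mention.
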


\begin{proof}
Without loss of generality, we may suppose that  $i=1.$  It follows from \cite[THEOREM II]{eliezer1968note} that  for each $j\in \{ 2, \dots, d \},$  there exsist $a_j, b_j \in \mathbb{C}$  such that  $T_j= a_j T_1 +b_j I.$ This implies that   $ T_1,\dots,T_d \in \mathbf{Sp}\{T_1,I\}.$
Therefore, by applying Theorem \ref{THMPRIN}, we achieve the required result.
\end{proof}

\begin{remark}
The example \ref{Example Intro} shows that  Theorem \ref{THM5NEW} breaks down when the commutativity assumption is not satisfied.  
\end{remark}

The next result is a direct consequence of Theorem  \ref{THM5NEW}.

\begin{corollary}
Let $\mathbf{T}=(T_1,\dots,T_d)\in\mathcal{B(H)}^d$ be a commuting $d$-tuple of operators, where $\mathcal{H}$ is a two-dimensional Hilbert space. Then,  $JtW_q(\mathbf{T})$ is convex.
\end{corollary}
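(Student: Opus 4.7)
The plan is to reduce the corollary directly to Theorem \ref{THM5NEW} by splitting into two cases according to whether or not the tuple contains a non-scalar operator.

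First I would dispose of the degenerate case where every $T_i$ is a scalar operator, say $T_i = \lambda_i I$ with $\lambda_i \in \mathbb{C}$. Then for any $(x,y) \in \mathbb{S}_q(\mathcal{H})$ we have $\langle T_i x, y\rangle = \lambda_i \langle x, y\rangle = \lambda_i q$, so
\begin{equation*}
JtW_q(\mathbf{T}) = \{(\lambda_1 q, \lambda_2 q, \dots, \lambda_d q)\},
\end{equation*}
which is a singleton and hence trivially convex.

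In the remaining case, there exists some index $i \in \{1,\dots,d\}$ for which $T_i$ is not a scalar multiple of the identity. Since by hypothesis the $d$-tuple $\mathbf{T}$ is commuting, in particular $T_i T_j = T_j T_i$ holds for all $j \neq i$. Thus the hypotheses of Theorem \ref{THM5NEW} are satisfied (we are still assuming $\dim \mathcal{H}=2$), and applying it directly yields the convexity of $JtW_q(\mathbf{T})$.

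Combining the two cases yields the result. There is no real obstacle here; the only subtlety is recognizing that Theorem \ref{THM5NEW} has the extra hypothesis that at least one $T_i$ is non-scalar, so the all-scalar case must be handled separately, but that case is immediate since the joint $q$-numerical range collapses to a point.
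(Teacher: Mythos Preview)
Your proof is correct and follows the same approach as the paper, which simply states the corollary as a direct consequence of Theorem~\ref{THM5NEW}. You are in fact slightly more careful than the paper: Theorem~\ref{THM5NEW} requires the existence of a non-scalar $T_i$, and you explicitly dispose of the all-scalar case (where $JtW_q(\mathbf{T})$ is a singleton) before invoking it.
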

\begin{remark}
Take two self-adjoint operators  $T_1=\begin{pmatrix}
	0 & m \\
	m & 0 
\end{pmatrix}$ and $T_2=\begin{pmatrix}
m &0 \\
0 & 0 
\end{pmatrix}$ on $\mathbb{R}^2$ acting on $(\mathbb{R}^2,\|.\|_2)$, where $m$ is non-zero real number. Then
\begin{equation*}
	JtW_q(T_1,T_2)=\{(mx_2y_1+mx_1y_2,mx_1y_1):~ x_1^2+x_2^2=1,~ y_1^2+y_2^2=1,~ x_1y_1+x_2y_2=q\}.
\end{equation*}
Here, $(m\sqrt{1-q^2},0) \in JtW_q(T_1,T_2) $ and $(-m\sqrt{1-q^2},mq) \in JtW_q(T_1,T_2)$ but $\frac{1}{2}(m\sqrt{1-q^2},0)+\frac{1}{2}(-m\sqrt{1-q^2},q)=\left(0, \frac{mq}{2}\right) \notin JtW_q(T_1,T_2)$. Thus, $JtW_q(T_1,T_2)$ is not convex. 
In this example, we observed that even if $T_i, 1 \leq i \leq d$ are self-adjoint operators, $JtW_q(\mathbf{T})$ is still not convex in general.
\end{remark}
We now focus to examine the relationships between the point spectrum $\sigma_p(\mathbf{T})$, the approximate point spectrum $\sigma_{ap}(\mathbf{T})$, and the joint $q$-numerical range $W_q(\mathbf{T})$ of $\mathbf{T}$.

Let  $\mathbf{T}=(T_{1}, \dots, T_{d}) \in \mathcal{B(H)}^d$ be a commuting $d$-tuple of operators that is $T_iT_j=T_jT_i$ for all $i,j \in \{1,2,...,d\}$. The point spectrum $\sigma_p(\mathbf{T})$ and the  approximate point spectrum $\sigma_{ap}(\mathbf{T})$ of $\mathbf{T}$ are respectively given by:
\begin{align*}
\sigma_p(\mathbf{T})&:=\left\{ (\xi_1,\dots,  \xi_d) \in \mathbb{C}^d :  \bigcap_{i=1}^{d} \mathcal{N}(T_i - \xi_i  I) \neq \{0 \} \right\}
\end{align*}
and 
\begin{align*}
\sigma_{ap}(\mathbf{T})&=\left\{ (\xi_1,\dots,  \xi_d) \in \mathbb{C}^d  :  \inf\left\{ \sum_{i=1}^{d} \Vert (T_i - \xi_i  I)x \Vert : x\in \mathcal{H}, \Vert x \Vert =1\right\}=0   \right\}.
\end{align*}

Note that in the case of $d$-tuple  $\mathbf{T}=(T_{1}, \dots, T_{d})$ of non-commuting operators, we cannot guarantee that these sets are non-empty. Indeed, if we take   $T_1=\begin{pmatrix}
	0 & 1 \\
	0 & 0 
\end{pmatrix}$ and $T_2=\begin{pmatrix}
1 &0 \\
0 & 0 
\end{pmatrix}$, then by a simple calculation, we can show that  $T_{1}T_{2} \neq T_{2} T_{1}$ and $\sigma_{p}(T_1,T_2)=\sigma_{ap}(T_1, T_2)=\emptyset.$

For more information concerning these sets and their applications, we refer the reader to  \cite[p. 91]{r.vm}.

\begin{theorem}
	Let $\mathbf{T}=(T_1,\dots,T_d)\in \mathcal{B(H)}^d.$  Then
	\begin{itemize}
		\item [(i)] $q \sigma_p(\mathbf{T}) \subseteq JtW_q(\mathbf{T})$,
		\item [(ii)] $q\sigma_{ap}(\mathbf{T})\subseteq \overline{JtW_q(\mathbf{T})},$
		where $\overline{JtW_q(\mathbf{T})}$ denotes the closure of $JtW_q(\mathbf{T})$.		
	\end{itemize}
	
\end{theorem}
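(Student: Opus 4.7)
The strategy is to unpack the two spectral sets and, in each case, produce a partner vector $v$ so that the pair $(x,v)$ lies in $\mathbb{S}_q(\mathcal{H})$ and reproduces the required component of $q\sigma_{\ast}(\mathbf{T})$.

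For part (i), I would take $(\xi_1,\dots,\xi_d)\in\sigma_p(\mathbf{T})$ and choose a common unit eigenvector $x$ with $T_i x=\xi_i x$ for all $i$, which exists by definition of the joint point spectrum. The plan is then to exhibit a unit vector $v$ with $\langle x,v\rangle=q$: indeed, using the hypothesis $\dim \mathcal{H}\ge 2$, pick any unit vector $z\perp x$ and set
\begin{equation*}
v := \overline{q}\, x + \sqrt{1-|q|^2}\, z.
\end{equation*}
A direct computation gives $\|v\|^2=|q|^2+(1-|q|^2)=1$ and $\langle x,v\rangle =q$, so $(x,v)\in\mathbb{S}_q(\mathcal{H})$. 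Then $\langle T_i x,v\rangle=\xi_i\langle x,v\rangle=q\xi_i$ for each $i$, which shows that $(q\xi_1,\dots,q\xi_d)\in JtW_q(\mathbf{T})$.

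For part (ii), I would mimic the same construction approximately. Given $(\xi_1,\dots,\xi_d)\in\sigma_{ap}(\mathbf{T})$, select a sequence $\{x_n\}$ of unit vectors with $\sum_{i=1}^d\|(T_i-\xi_i I)x_n\|\to 0$, hence $\|(T_i-\xi_i I)x_n\|\to 0$ for each $i$. For each $n$, choose a unit vector $z_n$ orthogonal to $x_n$ and define $v_n:=\overline{q}\, x_n+\sqrt{1-|q|^2}\, z_n$, so that $(x_n,v_n)\in\mathbb{S}_q(\mathcal{H})$. Writing
\begin{equation*}
\langle T_i x_n,v_n\rangle = \langle (T_i-\xi_i I)x_n,v_n\rangle + \xi_i\langle x_n,v_n\rangle,
\end{equation*}
the Cauchy--Schwarz inequality together with $\|v_n\|=1$ controls the first term by $\|(T_i-\xi_i I)x_n\|\to 0$, while the second term is exactly $\xi_i q$. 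Thus $\langle T_i x_n,v_n\rangle\to q\xi_i$ for every $i$, which places $q(\xi_1,\dots,\xi_d)$ in the closure $\overline{JtW_q(\mathbf{T})}$.

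The only delicate point is the construction of the unit vector $v$ (resp.\ $v_n$) with $\langle x,v\rangle=q$ for an arbitrary $q\in\mathbb{C}$ with $|q|\le1$; this is where the standing assumption $\dim\mathcal{H}\ge 2$ is used, to guarantee the existence of a unit vector orthogonal to $x$ (or to $x_n$). Once this is settled, both assertions reduce to routine inner-product manipulations.
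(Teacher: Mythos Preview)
Your proof is correct and follows essentially the same approach as the paper: construct the partner vector $v=\overline{q}\,x+\sqrt{1-|q|^{2}}\,z$ from an orthogonal unit $z$ to place $(x,v)\in\mathbb{S}_q(\mathcal{H})$, then use the eigenvector (resp.\ approximate eigenvector) relation and Cauchy--Schwarz. The only minor difference is that in part~(ii) the paper simply asserts the existence of $y_m$ with $(x_m,y_m)\in\mathbb{S}_q(\mathcal{H})$ without writing the explicit formula, whereas you carry the construction through; the estimates are otherwise identical.
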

\begin{proof}
	\begin{itemize}
\item[(i)]	 Let $\xi=(\xi_1,\dots,\xi_d) \in \sigma_p(\mathbf{T}).$ By the definition of $\sigma_p(\mathbf{T}),$ there exists a unit vector $x$ of $\mathcal{H}$  such that $T_{i}x=\xi_i x$  for all $i\in \{ 1, \dots, d \}.$ Let $z\in \mathcal{H}$ such that $\Vert z \Vert =1$ and $\langle x, z \rangle=0.$ We put $y= \overline{q}x+ \sqrt{1-|q|^2}z$. Clearly $(x,y)\in \mathbb{S}_q(\mathcal{H}).$  Moreover, for every $i\in \{ 1, \dots, d\},$ we have
	\begin{align*}
			\xi_i q= \xi_i \langle x,y \rangle
			= \langle \xi_i x,y \rangle
			= \langle T_ix,y \rangle.
		\end{align*}
	This implies that $\xi q \in 	JtW_q(\mathbf{T}).$ Consequently,  $q \sigma_p(\mathbf{T}) \subseteq JtW_q(\mathbf{T})$.

	\item [(ii)]
	Let $\xi=(\xi_1,\dots,\xi_d) \in \sigma_{ap}(\mathbf{T}),$ then there exists a sequence $\{x_m\}$ of unit vectors of $\mathcal{H}$ such that 
\begin{align*}
\lim_{m\to +\infty }\big\|( T_i-\xi_i I)x_m \big\| =0   \ \  (i=1, \dots, d).
\end{align*}	
On the other hand, 	for each $x_m \in \mathcal{H}$, there exists  $y_m \in \mathcal{H}$ such that $(x_m, y_m) \in \mathbb{S}_q(\mathcal{H})$. In particular,  $\langle Tx_m,y_m \rangle \in JtW_q(\mathbf{T}).$ Using Cauchy-Schwarz's inequality, we get that

	\begin{align*}
		|\langle T_ix_m,y_m \rangle -q\xi_i|=&|\langle T_ix_m,y_m \rangle -\xi_i\langle x_m,y_m \rangle|\\
		=&|\langle (T_i-\xi_i)x_m,y_m \rangle| \\
		\le& \|(T_i-\xi_i)x_m\| \|y_m\|\\
		=&\|(T_i-\xi_i)x_m\|,
	\end{align*}
for all $i\in \{1, \dots, d\}$ and all $m\in \mathbb{N}.$	This implies that 
\begin{align*}
\lim_{m\to +\infty} \langle T_ix_m,y_m \rangle = q \xi_i \ \  (i=1, \dots, d).
\end{align*}
Therefore $q\xi \in\overline{JtW_q(\mathbf{T})},$ and hence $q\sigma_{ap}(\mathbf{T})\subseteq \overline{JtW_q(\mathbf{T})}.$
		\end{itemize}
\end{proof}

Next, we aim to explore the relationship between the joint $C$-numerical range defined in \cite[p.371]{gau2021numerical}, where $C$ denotes a conjugation that is antilinear isometric involution; ($C \in \mathcal{B}(\mathcal{H})$~(\text{space of all bounded antilinear operators in the space} ~$\mathcal{H}$), $C^2=I$, ~\text{and} ~$\langle x,y \rangle=\langle Cx, Cy\rangle$ ~\text{for all} ~$x,y \in \mathcal{H})$ \cite{ptak2019c}, and the joint $q$-numerical range. Before proceeding, let us recall some fundamental facts.

Let $\mathcal{M}_n(\mathbb{C})$ denote the $C^*$-algebra of all $n \times n$ matrices. For $\mathbf{T} = (T_1, T_2, \ldots, T_d)$ with $T_i \in \mathcal{M}_n(\mathbb{C})$; $1 \le i \le d$ and a fixed matrix $C \in \mathcal{M}_n(\mathbb{C})$, the joint $C$-numerical range of $\mathbf{T}$ is defined as:
\begin{equation*}
    JtW_C(\mathbf{T}) = \left\{ \operatorname{tr}(CU^* \mathbf{T}U) : U \text{ is an } n \times n \text{ unitary matrix} \right\},
\end{equation*}
where $\operatorname{tr}(CU^* \mathbf{T}U) = (\operatorname{tr}(CU^* T_1 U), \operatorname{tr}(CU^* T_2 U), \ldots, \operatorname{tr}(CU^* T_d U))$. Here, for any $A \in \mathcal{M}_n(\mathbb{C})$, $tr(A)$ denotes the trace of $A$.  Clearly,
\begin{equation*}
    JtW_C(\mathbf{T}) = Jt_{V^* CV}(U^* \mathbf{T} U),
\end{equation*}
for any pair of $n \times n$ unitary matrices $U$ and $V$, where the $d$-tuple $U^* \mathbf{T} U$ is given by $U^* \mathbf{T} U := (U^* T_1 U, U^* T_2 U, \ldots, U^* T_d U)$.

In the following theorem, we will establish connections between the joint $C$-numerical range and the joint $q$-numerical range.

\begin{theorem}
Let $\mathbf{T}=(T_1,T_2,...,T_d) \in \mathcal{B}(\mathcal{H})^d$. Then
\begin{itemize}
	\item [(i)] If $\{T_i\}_{i=1}^d$ are bounded linear operators on a two-dimensional Hilbert space $\mathcal{H}$ and $|q|\le 1$, then $JtW_q(\mathbf{T})=JtW_C(\mathbf{T})$ for $C= 
	\begin{pmatrix}
		{q} & \sqrt{1-|q|^2}\\
		0 & 0
	\end{pmatrix}
	$,
\item [(ii)] If $\{T_i\}_{i=1}^d$ and $C$ are $n \times n ~(n \ge 2)$ matrices and $|q|\le 1$, then $JtW_q(\mathbf{T})=JtW_C(\mathbf{T})$, where $C=\begin{pmatrix}
{q} & \sqrt{1-|q|^2}\\
0 & 0
\end{pmatrix} \oplus 0_{n-2}$, 
\item [(iii)] If $C$ is $n \times n$ rank one matrix and $q \in [0,1]$, then $JtW_C(\mathbf{T})=\mu JtW_q(\mathbf{T})$, where $\mu$ is a non-zero scalar.
\end{itemize}
\end{theorem}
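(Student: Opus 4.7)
My strategy is a single direct computation, since the two matrices $C$ differ only by a trailing $(n-2)\times(n-2)$ zero block that contributes nothing to the trace. Writing $U=[u_1,u_2,\dots,u_n]$ in terms of its (orthonormal) columns, so that $(U^{*}T_iU)_{jk}=\langle T_iu_k,u_j\rangle$, only the $(1,1)$-entry of $CU^{*}T_iU$ is nonzero on the diagonal, and
\begin{equation*}
\operatorname{tr}(CU^{*}T_iU)=q\langle T_iu_1,u_1\rangle+\sqrt{1-|q|^{2}}\,\langle T_iu_1,u_2\rangle=\langle T_iu_1,\,\bar{q}u_1+\sqrt{1-|q|^{2}}\,u_2\rangle.
\end{equation*}
Setting $x=u_1$ and $y=\bar{q}u_1+\sqrt{1-|q|^{2}}\,u_2$ gives $(x,y)\in\mathbb{S}_q(\mathcal{H})$, hence $JtW_C(\mathbf{T})\subseteq JtW_q(\mathbf{T})$. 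For the reverse inclusion, given $(x,y)\in\mathbb{S}_q(\mathcal{H})$ with $|q|<1$ I put $u_1=x$ and $u_2=(y-\bar{q}x)/\sqrt{1-|q|^{2}}$; the identities $\|y-\bar{q}x\|^{2}=1-|q|^{2}$ and $\langle x,y-\bar{q}x\rangle=0$ ensure $\{u_1,u_2\}$ is orthonormal. In (i) the $2\times 2$ matrix $U=[u_1,u_2]$ is already unitary, while in (ii) one completes $\{u_1,u_2\}$ to an orthonormal basis of $\mathbb{C}^{n}$ via Gram--Schmidt.

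\textbf{Proof plan for (iii).} Any rank-one matrix factors as $C=\alpha vw^{*}$ with $\|v\|=\|w\|=1$ and $\alpha\in\mathbb{C}\setminus\{0\}$; the residual phase freedom $(v,w,\alpha)\mapsto(e^{i\theta_1}v,\,e^{i\theta_2}w,\,e^{-i(\theta_1-\theta_2)}\alpha)$ lets me rotate the phase of $\langle v,w\rangle$ at will, so I arrange $\langle v,w\rangle=q\in[0,1]$ (concretely $q=|\operatorname{tr}(C)|/\sigma$, where $\sigma$ is the nonzero singular value of $C$). By cyclicity of the trace,
\begin{equation*}
\operatorname{tr}(CU^{*}T_iU)=\alpha\,w^{*}U^{*}T_iUv=\alpha\,\langle T_i(Uv),\,Uw\rangle,
\end{equation*}
and $(Uv,Uw)\in\mathbb{S}_q(\mathcal{H})$. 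Conversely, given $(x,y)\in\mathbb{S}_q(\mathcal{H})$ I define $U$ on $\operatorname{span}\{v,w\}$ by $Uv=x$, $Uw=y$; the equality $\langle v,w\rangle=q=\langle x,y\rangle$ makes this map inner-product preserving, so it extends to a unitary on $\mathbb{C}^{n}$. Taking $\mu:=\alpha$ yields $JtW_C(\mathbf{T})=\mu\,JtW_q(\mathbf{T})$.

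\textbf{Main obstacle.} The only nontrivial step in each part is the extension argument: a partial isometry defined on a one- or two-dimensional frame must be completed to a full unitary on the ambient space. In (iii), the well-definedness of the partial map $v\mapsto x,\,w\mapsto y$ hinges precisely on the identity $\langle v,w\rangle=\langle x,y\rangle$, which is why the value of $q$ is forced by $C$ rather than chosen independently. Across all three parts the degenerate regime $|q|=1$ (where $v$ and $w$ become parallel) should be peeled off at the start to avoid the vanishing denominator $\sqrt{1-|q|^{2}}$; once isolated it reduces to the trivial case $y=\bar{q}x$.
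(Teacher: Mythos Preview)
Your argument for (i) and (ii) is correct and follows the same line as the paper: both compute $\operatorname{tr}(CU^{*}T_iU)$ in terms of the first two columns of $U$ and identify the result as $\langle T_ix,y\rangle$ for a suitable $(x,y)\in\mathbb{S}_q(\mathcal{H})$, with the reverse inclusion obtained by building $U$ from $x$ and the normalized $y-\bar q x$. The only cosmetic difference is that the paper first treats the case $q=|q|$ and then passes to general $q$ via $JtW_{e^{i\phi}|q|}(\mathbf{T})=e^{i\phi}JtW_{|q|}(\mathbf{T})$ together with the unitary similarity of $e^{i\phi}C'$ and $C$, whereas you work with complex $q$ from the start; your version is a mild streamlining of the same computation.

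For (iii) you take a genuinely different route. The paper argues by unitary normal form: a rank-one $C$ is unitarily similar to $\mu\begin{pmatrix} q & \sqrt{1-q^{2}}\\ 0 & 0\end{pmatrix}\oplus 0_{n-2}$ for some scalar $\mu$ and some $q\in[0,1]$, and then simply invokes part (ii). You instead write $C=\alpha vw^{*}$ with unit $v,w$, use trace cyclicity to get $\operatorname{tr}(CU^{*}T_iU)=\alpha\langle T_i(Uv),Uw\rangle$, and for the converse build $U$ as the unitary extension of the partial isometry $v\mapsto x$, $w\mapsto y$ (legitimate precisely because $\langle v,w\rangle=\langle x,y\rangle=q$). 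Your argument is self-contained and makes both $q=|\langle v,w\rangle|$ and $\mu=\alpha$ explicit without appealing to (ii); the paper's version is shorter because it recycles (ii) and so ties the three parts together. Both are correct.
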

\begin{proof}
\begin{itemize}
	\item [(i)]

Let $\langle \mathbf{T}x,y \rangle \in JtW_{|q|}(\mathbf{T}) $, where $(x,y) \in \mathbb{S}_{|q|}(\mathbb{C}^2)$, thus 
$$y=|q|x+\sqrt{1-|q|^2}z$$ for some unit vector $z$ orthogonal to $x$. Then $T_i$ can be expressed as $\begin{pmatrix}
	\langle T_ix,x \rangle & 	\langle T_iz,x \rangle\\
		\langle T_ix,z \rangle & 	\langle T_iz,z \rangle
\end{pmatrix}$ with respect to the orthonormal basis $\{x,z\}$ of $\mathbb{C}^2$. Take $C'=\begin{pmatrix}
|q| & \sqrt{1-|q|^2}\\
0 & 0
\end{pmatrix}$. We have 
$$\langle T_ix,y \rangle=|q|	\langle T_ix,x \rangle+\sqrt{1-|q|^2}	\langle T_ix,z \rangle=tr(C'T_i).$$
 This implies, 
$\langle \mathbf{T}x,y \rangle=tr(C'\mathbf{T})=(tr(C'T_1),tr(C'T_2),...,tr(C'T_d))$. Thus 
$$JtW_{|q|}(\mathbf{T})\subseteq JtW_{C'}(\mathbf{T}).$$
 Conversely, Let $tr(C'U^*\mathbf{T}U) \in JtW_{C'}(\mathbf{T})$, where $U$ is $2 \times 2 $ unitary matrix. Letting $U^*T_iU=(a^{(i)}_{\ell m})_{\ell,m=1}^2$ $(1 \le i \le d)$, $x=U\begin{pmatrix}
1 \\ 0
\end{pmatrix}$ and $z=U\begin{pmatrix}
0 \\ 1
\end{pmatrix}$, we obtain that
\begin{align*}
tr(C'U^*A_iU)=&|q|a^{(i)}_{11}+\sqrt{1-|q|^2}a^{(i)}_{21}\\
=&|q| \Big \langle (U^* T_i U)\begin{pmatrix}
	1 \\ 0
\end{pmatrix}, \begin{pmatrix}
1 \\ 0
\end{pmatrix} \Big \rangle +\sqrt{1-|q|^2}\Big \langle (U^* T_i U)\begin{pmatrix}
1 \\ 0
\end{pmatrix}, \begin{pmatrix}
0 \\ 1
\end{pmatrix} \Big \rangle\\
=& |q| \langle T_i
x,x \rangle + \sqrt{1-|q|^2}\langle T_i
x,z \rangle\\
=&  \langle T_i
x,|q|x+\sqrt{1-|q|^2}z \rangle.
\end{align*}
If $y=|q|x+\sqrt{1-|q|^2}z$, then $(x,y)\in\mathbb{S}_q(\mathcal{H})$.
Therefore, $tr(C'U^* \mathbf{T}U)\in JtW_{|q|}(\mathbf{T})$ and $JtW_{C'}(\mathbf{T}) \subseteq JtW_{|q|}(\mathbf{T})$. Thus,  $JtW_{|q|}(\mathbf{T})= JtW_{C'}(\mathbf{T})$.
Take $q=e^{i \phi}|q|;~\phi$ is any real number, then by using Proposition 1 (v) and unitarily similarity of $e^{i\phi}C'$ and $C$, we have 
$$ JtW_q(\mathbf{T})= e^{i \phi}JtW_{|q|}(\mathbf{T})=e^{i \phi}JtW_{C'}(\mathbf{T})=JtW_{e^{i \phi}C'}(\mathbf{T})=JtW_{C}(\mathbf{T}).$$
 Hence, the result.
\item [(ii)] By using similar argument analogous in part (i), let $S=span\{x,z\}$, then $\mathbb{C}^n=S \oplus S^\perp$. Let $\{x,z,v_1,v_2,...v_{n-2}\}$ is an orthonormal basis for $\mathbb{C}^n$, where $\{v_1,v_2,...v_{n-2}\}$ is an orthonormal basis for $S^{\perp}$. Thus, $T_i ~(1 \le i \le d)$ can be expressed as $$\begin{pmatrix}
	\langle T_ix,x \rangle & 	\langle T_iz,x \rangle & 	\langle T_iv_1,x \rangle& \cdots & 	\langle T_iv_{n-2},x \rangle \\
	\langle T_ix,z \rangle & 	\langle T_iz,z \rangle & 	\langle T_iv_1,z \rangle& \cdots &  	\langle T_iv_{n-2},z \rangle \\
	\langle T_ix,v_1 \rangle & 	\langle T_iz,v_1 \rangle & 	\langle T_iv_1,v_1 \rangle& \cdots &  	\langle T_iv_{n-2},v_1 \rangle \\
	\vdots & \vdots & \vdots & \vdots & \vdots \\
	\langle T_ix,v_{n-2} \rangle & 	\langle T_iz,v_{n-2} \rangle & 	\langle T_iv_1,v_{n-2} \rangle& \cdots &  	\langle T_iv_{n-2},v_{n-2} \rangle \\
	
\end{pmatrix}$$ with respect to the orthonormal basis $\{x,z,v_1,v_2,...v_{n-2}\}$ of $\mathbb{C}^n$. Therefore, $JtW_{|q|}(\mathbf{T})\subseteq JtW_{C'}(\mathbf{T})$, where $C'=\begin{pmatrix}
|q| & \sqrt{1-|q|^2}\\
0 & 0
\end{pmatrix} \oplus 0_{n-2}$,  . Conversely, as similar to part (i), by taking $U^*T_iU=(a^{(i)}_{\ell m})_{\ell,m=1}^n$, $(1 \le i \le d)$, $x=U(1,0,...,0)^{T}$ and $z=U(0,1,0,...,0)^{T}$, we obtain that $JtW_{C'}(\mathbf{T})\subseteq JtW_{|q|}(\mathbf{T}) $, where $(1,0,...,0)^{T}$ and $(0,1,0,...,0)^{T}$ denote the transpose of $(1,0,...,0)$ and $(0,1,0,...,0)$ respectively. Thus $JtW_{|q|}(\mathbf{T})= JtW_{C'}(\mathbf{T})$ and $JtW_{q}(\mathbf{T})= JtW_{C}(\mathbf{T})$.
\item [(iii)] Since $C$ is a rank one matrix, $C$ is unitarily similar to a matrix of the form $\begin{pmatrix}
	u_1 & u_2 \\
	0 & 0
\end{pmatrix} \oplus 0_{n-2}$, where $u_1$ and $u_2$ are scalars. Matrix $\begin{pmatrix}
u_1 & u_2 \\
0 & 0
\end{pmatrix}$ is unitarily similar to $e^{it}\sqrt{|u_1|^2+|u_2|^2}\begin{pmatrix}
\frac{|u_1|}{\sqrt{|u_1|^2+|u_2|^2}} & \frac{|u_2|}{\sqrt{|u_1|^2+|u_2|^2}}\\
0 & 0
\end{pmatrix}$, where $t$ is any real number. We may take $C= \mu \begin{pmatrix}
q & \sqrt{1-q^2}\\
0 & 0 
\end{pmatrix}\oplus 0_{n-2}$. Thus, by part (ii), we have $JtW_C(\mathbf{T})= \mu JtW_q(\mathbf{T})$. 
\end{itemize}
\end{proof}

We now introduce the $q$-joint numerical radius $Jtw_q(\mathbf{T})$ as follows,
\begin{equation*}
	Jt\omega_q(\mathbf{T})=\sup\{\Vert(\langle T_1x,y\rangle,\dots,\langle T_d x,y\rangle)\Vert_2:(x,y) \in \mathbb{S}_{q}(\mathcal{H})\}.
\end{equation*}
The following proposition contains some properties for $q$-joint numerical radius.
\begin{proposition}
	Let $\mathbf{T}=(T_1,T_2,\cdots,T_d)\in \mathcal{B(H)}^d$ and $\mathbf{S}=(S_1,S_2,\cdots,S_d)\in \mathcal{B(H)}^d$ and $q$ be any complex number with $|q| \leq 1$. Then
	\begin{itemize}
		\item[(i)] $Jt\omega_q(\xi \mathbf{T})=\vert{\xi}\vert Jt\omega_q(\mathbf{T})$ for any complex number $\xi$,
		\item[(ii)] $Jt\omega_q(\mathbf{T}+\mathbf{S})\leq Jt\omega_q(\mathbf{T})+Jt\omega_q(\mathbf{S})$,
		\item[(iii)]$Jt\omega_{\overline{q}}(\mathbf{T}^*)=Jt\omega_q(\mathbf{T})$ and $Jt\omega_{q}(\mathbf{T}^*)=Jt\omega_{\overline{q}}(\mathbf{T})$,
		\item[(iv)] $Jt\omega_q(U^*\mathbf{T}U)=Jt\omega_q(\mathbf{T})$, where $U$ is a unitary operator on $\mathcal{H}$,		
		\item [(v)]$Jt\omega_q(\Re(\mathbf{T}))\le \frac{1}{2}\left( Jt\omega_q(\mathbf{T}) +Jt\omega_{\overline{q}}(\mathbf{T})\right)$ and $Jt\omega_q(\Im(\mathbf{T}))\le \frac{1}{2}\left( Jt\omega_q(\mathbf{T}) +Jt\omega_{\overline{q}}(\mathbf{T})\right)$, where $\Re(\mathbf{T})=(\Re(T_1),\Re(T_2),...,\Re(T_d))$ denotes the real part of $\mathbf{T}$ and $\Im(\mathbf{T})=(\Im(T_1),\Im(T_2),...,\Im(T_d))$ denotes the imaginary part of $\mathbf{T}$.
	\end{itemize}
\end{proposition}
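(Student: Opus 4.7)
The plan is to reduce all five items to direct computations using the definition of $Jt\omega_q$ together with basic properties of inner products, triangle inequalities in $\ell^2$, and the characterization of $\mathbb{S}_q(\mathcal{H})$. None of the items seems to require a deep idea; rather, the work is to keep careful track of what happens to the pair $(x,y)$ and to the scalar $q$ under the various transformations.

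For (i), I would observe that $\langle \xi T_i x, y\rangle = \xi \langle T_i x, y\rangle$, so the vector $(\langle \xi T_1 x, y\rangle,\dots,\langle \xi T_d x, y\rangle)$ is exactly $\xi$ times $(\langle T_1 x, y\rangle,\dots,\langle T_d x, y\rangle)$; pulling $|\xi|$ out of the $\|\cdot\|_2$ and then taking the supremum over $(x,y)\in\mathbb{S}_q(\mathcal{H})$ gives the claim. For (ii), the identity $\langle(T_i+S_i)x,y\rangle=\langle T_ix,y\rangle+\langle S_ix,y\rangle$ and the triangle inequality in $\ell^2_d$ yield, for any fixed $(x,y)\in\mathbb{S}_q(\mathcal{H})$, the bound $\|\langle(\mathbf{T}+\mathbf{S})x,y\rangle\|_2\le \|\langle\mathbf{T}x,y\rangle\|_2+\|\langle\mathbf{S}x,y\rangle\|_2$; the supremum of a sum of two non-negative functions is at most the sum of the suprema, which gives (ii).

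For (iii), the key is the bijection $(x,y)\leftrightarrow(y,x)$ between $\mathbb{S}_q(\mathcal{H})$ and $\mathbb{S}_{\overline q}(\mathcal{H})$, together with the identity $\langle T_i^* x, y\rangle=\overline{\langle T_i y, x\rangle}$, so $\|\langle\mathbf{T}^* x,y\rangle\|_2=\|\langle\mathbf{T}y,x\rangle\|_2$; taking $\sup$ over $(x,y)\in\mathbb{S}_{\overline q}(\mathcal{H})$ on the left corresponds to $\sup$ over $(y,x)\in\mathbb{S}_q(\mathcal{H})$ on the right, yielding the first equality, and the second follows by replacing $q$ by $\overline q$. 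For (iv), I would note that if $U$ is unitary then $(x,y)\in\mathbb{S}_q(\mathcal{H})$ if and only if $(Ux,Uy)\in\mathbb{S}_q(\mathcal{H})$ (isometry preserves norms and $\langle Ux,Uy\rangle=\langle x,y\rangle$), and $\langle U^*T_iUx,y\rangle=\langle T_i(Ux),Uy\rangle$; a change of variables in the defining supremum then gives the equality.

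Finally, (v) follows by combining the previous items. Write $\Re(\mathbf{T})=\tfrac12(\mathbf{T}+\mathbf{T}^*)$ and apply (i) with $\xi=1/2$ and then (ii) to get
\begin{equation*}
Jt\omega_q(\Re(\mathbf{T}))\le\tfrac12\bigl(Jt\omega_q(\mathbf{T})+Jt\omega_q(\mathbf{T}^*)\bigr),
\end{equation*}
and then use (iii) to replace $Jt\omega_q(\mathbf{T}^*)$ by $Jt\omega_{\overline q}(\mathbf{T})$. The imaginary-part inequality is identical after noting $\Im(\mathbf{T})=\tfrac1{2i}(\mathbf{T}-\mathbf{T}^*)$ and that the factor $|1/(2i)|=1/2$ again appears through (i). The only mild subtlety I anticipate is bookkeeping in (iii): one must verify carefully that $\langle x,y\rangle=\overline q$ is equivalent to $\langle y,x\rangle=q$ so that the swap $(x,y)\mapsto(y,x)$ is a genuine bijection between the two constraint sets; once that is in hand, everything else is routine.
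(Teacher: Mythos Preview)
Your proposal is correct and follows essentially the same approach as the paper, which simply records that (i) and (iii) are immediate from the definition, (ii) from the definition together with the Minkowski (triangle) inequality in $\ell^2_d$, (iv) from the unitary invariance of $JtW_q$ (your change-of-variables argument), and (v) from the Cartesian decomposition combined with (i)--(iii). Your write-up is actually more explicit than the paper's one-sentence sketch, and the only difference is that the paper invokes its earlier Proposition on $JtW_q(U^*\mathbf{T}U)=JtW_q(\mathbf{T})$ for (iv) rather than redoing the substitution directly.
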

\begin{proof}
Part (i) and (iii) follow from the definition directly and (ii) follows from the definition and Minkowski inequality. (iv) follows from Proposition \ref{prop1}(i). By using the Cartesian decomposition of $T_i$, $1 \le i \le d$ and Minkowski inequality, we can obtain (v) easily.
\end{proof}
\begin{theorem}
Let $|q| \le 1$. $Jt\omega_q(.)$ defines a semi-norm on $\mathcal{B}(\mathcal{H})^d$. Moreover, $Jt\omega_q(.)$ is a norm on $\mathcal{B}(\mathcal{H})^d$ if and only if $\omega_q(.)$ is a norm on $\mathcal{B}(\mathcal{H})$. 
\end{theorem}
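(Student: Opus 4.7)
The plan is to separate the statement into two parts. First I would verify the seminorm axioms, which are essentially already in hand from the preceding proposition. Non-negativity is immediate from the definition, since $Jt\omega_q(\mathbf{T})$ is a supremum of non-negative real numbers. Absolute homogeneity $Jt\omega_q(\xi\mathbf{T})=|\xi|\,Jt\omega_q(\mathbf{T})$ is exactly item (i) of the preceding proposition, and the triangle inequality is item (ii). So the seminorm claim is just an assembly of these facts and requires essentially no new work.

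Next I would establish the two inequalities that make the norm characterization trivial: for every $(x,y)\in\mathbb{S}_q(\mathcal{H})$ and every coordinate $i$,
\begin{equation*}
|\langle T_i x,y\rangle|\le\bigl\|(\langle T_1 x,y\rangle,\dots,\langle T_d x,y\rangle)\bigr\|_2,
\end{equation*}
which upon taking suprema gives $\omega_q(T_i)\le Jt\omega_q(\mathbf{T})$ for each $i\in\{1,\dots,d\}$. Conversely, for a single operator $T\in\mathcal{B}(\mathcal{H})$ the $d$-tuple $\mathbf{T}_T:=(T,0,\dots,0)$ satisfies $Jt\omega_q(\mathbf{T}_T)=\omega_q(T)$ directly from the definitions, since the Euclidean norm of a vector with one non-zero coordinate equals the modulus of that coordinate.

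With these two observations the equivalence is immediate. For the forward implication, if $Jt\omega_q(\cdot)$ is a norm and $\omega_q(T)=0$, then $Jt\omega_q(\mathbf{T}_T)=\omega_q(T)=0$, so $\mathbf{T}_T=0$, hence $T=0$; together with the fact that $\omega_q(\cdot)$ is already a seminorm on $\mathcal{B}(\mathcal{H})$ (a classical fact, or a one-variable special case of the seminorm claim above), this shows $\omega_q(\cdot)$ is a norm. For the reverse implication, if $\omega_q(\cdot)$ is a norm and $Jt\omega_q(\mathbf{T})=0$, then $\omega_q(T_i)\le Jt\omega_q(\mathbf{T})=0$ for every $i$, so each $T_i=0$ and hence $\mathbf{T}=0$.

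There is no real obstacle here; the argument is purely bookkeeping on the preceding proposition plus the elementary estimate $|z_i|\le\|(z_1,\dots,z_d)\|_2$. The only point demanding a tiny bit of care is invoking that $\omega_q(\cdot)$ is a seminorm in the forward direction, which can either be cited from the literature on the $q$-numerical radius or recovered as the special case $d=1$ of the seminorm part just proved.
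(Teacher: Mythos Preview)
Your proposal is correct and follows essentially the same approach as the paper: the seminorm part is deferred to the preceding proposition, the implication $\omega_q$ norm $\Rightarrow$ $Jt\omega_q$ norm uses the coordinate inequality $\omega_q(T_i)\le Jt\omega_q(\mathbf{T})$, and the other implication uses the tuple $(T,0,\dots,0)$ exactly as the paper does (the paper phrases this last step as a proof by contradiction rather than directly, but the content is identical).
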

\begin{proof}
It is straightforward to demonstrate that $Jt\omega_q(.)$ constitutes a semi-norm. Through basic calculations, it becomes evident that when $\omega_q(.)$ is a norm, then $Jt\omega_q(\cdot)$ also forms a norm. Now, let's proceed to establish the converse part for the latter assertion. Suppose $Jtw_q(.)$ forms a norm on $\mathcal{B(H)}^d$ but $\omega_q(.)$ does not form a norm on $\mathcal{B(H)}$. Then there exist a non-zero operator $S$ such that $\omega_q(S)=0$. Take $\mathbf{T}=(S,O,O,...,O)$, where $O$ denotes zero operator on $\mathcal{B(H)}$. Clearly, $\mathbf{T} \ne \mathbf{O}$; $\mathbf{O}=\{O,O,...,O\}$ and $Jt\omega_q(\mathbf{T})=0$ as $\omega_q(S)=0$. Thus $Jt\omega_q(.)$ is not a norm which is a contradiction.
\end{proof}

As we know that $\omega_q(T)$ defines a norm for all $q$ such that $|q| \leq 1$ excluding $q = 0$, we can state the following corollary.
\begin{corollary}
$Jt\omega_q(.)$ defines a norm on $\mathcal{B}(\mathcal{H})^d$ for all $q\in \mathcal{D}\setminus \{0\}$ $\left(see \text{\cite[Theorem 3.1]{li1994generalized}}\right)$, where $\mathcal{D}$ denotes the closed unit disk.
\end{corollary}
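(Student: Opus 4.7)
The proof is a one-line deduction, so the plan is simply to trace the logical chain. The immediately preceding theorem has already done the substantive work by establishing the equivalence: $Jt\omega_q(\cdot)$ is a norm on $\mathcal{B}(\mathcal{H})^d$ if and only if $\omega_q(\cdot)$ is a norm on $\mathcal{B}(\mathcal{H})$. Consequently, the corollary collapses entirely to invoking a known single-variable fact.

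Concretely, I would first cite \cite[Theorem 3.1]{li1994generalized}, which asserts that $\omega_q(\cdot)$ defines a norm on $\mathcal{B}(\mathcal{H})$ for every $q$ in the closed unit disk $\mathcal{D}$ with $q\neq 0$. With this in hand, I would then apply the preceding theorem in the direction ``$\omega_q$ a norm $\Longrightarrow$ $Jt\omega_q$ a norm,'' which was noted to follow by basic calculations (essentially, positivity of $Jt\omega_q$ forces $\omega_q(T_i)=0$ for each coordinate, hence $T_i=0$ whenever $\omega_q$ separates points). Combining these two facts yields that $Jt\omega_q(\cdot)$ is a norm on $\mathcal{B}(\mathcal{H})^d$ for every $q\in\mathcal{D}\setminus\{0\}$.

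There is no real technical obstacle here, since the heavy lifting has been performed both in the cited result of Li and in the equivalence theorem just proved. The only point requiring care is the necessary exclusion of $q=0$: at that value the single-variable $q$-numerical radius fails to separate points, and by the ``only if'' half of the preceding theorem this defect propagates to $Jt\omega_0$ on $\mathcal{B}(\mathcal{H})^d$, so no extension of the corollary to $q=0$ is possible.
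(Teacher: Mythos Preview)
Your proposal is correct and matches the paper's approach exactly: the paper gives no separate proof of the corollary, instead prefacing it with the remark that $\omega_q(\cdot)$ is known to be a norm for all $q$ in the closed unit disk except $q=0$, and letting the preceding equivalence theorem do the rest. Your added observation about why $q=0$ must be excluded is accurate and consistent with the paper's setup.
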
 

In the following theorem, we provide a relation between $Jt\omega_q(\mathbf{T})$ and $\Vert{\mathbf{T}}\Vert$.
\begin{theorem}
	Let $\mathbf{T}=(T_1,T_2, \cdots,T_d) \in \mathcal{B(H)}^d$ and $q\in (0,1)$. Then
	\begin{equation}\label{ne1}
		\frac{q}{2\sqrt{d}(2-q^2)}\Vert{\mathbf{T}}\Vert\leq Jt\omega_q(\mathbf{T})\leq\Vert{\mathbf{T}}\Vert.
	\end{equation}
\end{theorem}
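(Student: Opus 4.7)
The plan is to handle the two inequalities separately: the upper bound is an immediate consequence of Cauchy--Schwarz applied coordinatewise, while the lower bound reduces to the single-variable estimate $\omega_q(T) \ge \tfrac{q}{2}\|T\|$, which in turn rests on the classical fact $\omega(T) \ge \tfrac{1}{2}\|T\|$. For the upper bound, fix $(x,y) \in \mathbb{S}_q(\mathcal{H})$; Cauchy--Schwarz yields $|\langle T_i x, y\rangle|^2 \le \|T_i x\|^2$ for each $i$, so
\[
\sum_{i=1}^d |\langle T_i x, y\rangle|^2 \;\le\; \sum_{i=1}^d \|T_i x\|^2 \;=\; \|\mathbf{T}x\|^2 \;\le\; \|\mathbf{T}\|^2.
\]
Taking the supremum over $\mathbb{S}_q(\mathcal{H})$ gives $Jt\omega_q(\mathbf{T}) \le \|\mathbf{T}\|$.

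For the lower bound, I would chain together three observations. First, since $\|\xi\|_2 \ge |\xi_j|$ for each coordinate $j$, one has $Jt\omega_q(\mathbf{T}) \ge \omega_q(T_j)$ for every $j \in \{1,\dots,d\}$. Second, I would prove the single-variable estimate $\omega_q(T) \ge \tfrac{q}{2}\|T\|$. To this end, fix a unit vector $x \in \mathcal{H}$ and any unit $z$ orthogonal to $x$ (which exists because $\dim \mathcal{H} \ge 2$), and set $y_\theta := qx + \sqrt{1-q^2}\,e^{i\theta}z$. A direct check shows $\|y_\theta\|=1$ and $\langle x, y_\theta\rangle = q$, so $(x, y_\theta) \in \mathbb{S}_q(\mathcal{H})$, and
\[
\langle Tx, y_\theta\rangle \;=\; q\,\langle Tx, x\rangle + \sqrt{1-q^2}\,e^{-i\theta}\langle Tx, z\rangle .
\]
Choosing $\theta$ so that $e^{-i\theta}\langle Tx, z\rangle$ has the same argument as $\langle Tx, x\rangle$ (or arbitrarily if $\langle Tx, x\rangle = 0$) makes the two summands additive in modulus and produces $|\langle Tx, y_\theta\rangle| \ge q|\langle Tx, x\rangle|$. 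Passing to the supremum over unit $x$ yields $\omega_q(T) \ge q\,\omega(T)$, and combining with the classical bound $\omega(T) \ge \tfrac{1}{2}\|T\|$ gives $\omega_q(T) \ge \tfrac{q}{2}\|T\|$. Third, applying \eqref{eq1} to the vector $(\|T_1 x\|,\ldots,\|T_d x\|) \in \mathbb{C}^d$ gives $\|\mathbf{T}x\| \le \sqrt{d}\,\max_j \|T_j x\| \le \sqrt{d}\,\max_j \|T_j\|$, whence $\|\mathbf{T}\| \le \sqrt{d}\,\max_j \|T_j\|$.

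Choosing $j^{*}$ with $\|T_{j^{*}}\| = \max_j \|T_j\|$ and concatenating:
\[
Jt\omega_q(\mathbf{T}) \;\ge\; \omega_q(T_{j^{*}}) \;\ge\; \frac{q}{2}\|T_{j^{*}}\| \;\ge\; \frac{q}{2\sqrt{d}}\|\mathbf{T}\| \;\ge\; \frac{q}{2\sqrt{d}(2-q^2)}\|\mathbf{T}\|,
\]
where the last step uses $2-q^2 \ge 1$ for $q \in (0,1)$. The main obstacle is the single-variable inequality $\omega_q(T) \ge q\,\omega(T)$: one must exploit the phase freedom in the auxiliary unit vector $z$ to align the two contributions to $\langle Tx, y_\theta\rangle$ and so prevent cancellation; everything else is routine bookkeeping with Cauchy--Schwarz, \eqref{eq1}, and the classical numerical radius inequality.
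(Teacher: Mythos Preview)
Your argument is correct and follows the same overall architecture as the paper: Cauchy--Schwarz for the upper bound, and for the lower bound the chain $\|\mathbf{T}\|\le\sqrt{d}\max_j\|T_j\|$ combined with a single-operator inequality $\omega_q(T_j)\ge c\,\|T_j\|$ and the coordinatewise estimate $Jt\omega_q(\mathbf{T})\ge\omega_q(T_j)$.

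The one substantive difference is in how that single-operator inequality is obtained. The paper invokes Theorem~2.1 of \cite{fakhri2022q}, which (as stated there) gives $\omega_q(T)\ge\frac{q}{2(2-q^2)}\|T\|$; this is the origin of the factor $2-q^2$ in \eqref{ne1}. You instead prove $\omega_q(T)\ge q\,\omega(T)$ directly via the phase-alignment trick and then apply $\omega(T)\ge\tfrac12\|T\|$, yielding the sharper $\omega_q(T)\ge\frac{q}{2}\|T\|$. Your route is self-contained and in fact produces the stronger bound $Jt\omega_q(\mathbf{T})\ge\frac{q}{2\sqrt{d}}\|\mathbf{T}\|$ before you artificially weaken it by the factor $2-q^2\ge 1$. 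This is precisely what the paper itself observes in the Remark immediately following the theorem, where it notes that the cited result contains a miscalculation and that the corrected constant is $\frac{q}{2}$. So your proof is not only valid but anticipates the paper's own correction.
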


\begin{proof}
	Let $x,y\in \mathcal{H}$ and $\mathbf{T}=(T_1,T_2,\dots,T_d)\in \mathcal{B(H)}^d$, we have
	\begin{align*}
		\Vert(\langle{T_1x,y}\rangle, \cdots, \langle{T_dx,y}\rangle)\Vert_2 
		=\left(\sum\limits_{i=1}^{d}\vert\langle{T_ix,y}\rangle\vert^2\right)^\frac{1}{2} 
		 \le \left(\sum\limits_{i=1}^{d}\Vert{T_ix}\Vert^2\Vert{y}\Vert^2\right)^\frac{1}{2}.		
	\end{align*}
Taking supremum over all $(x,y)\in \mathbb{S}_q({\mathcal{H}})$, we obtain that $JtW_q(\mathbf{T}) \le \| \mathbf{T}\|.$

Now, by using inequality \eqref{eq1} and taking $x \in \mathbb{S}^1$, we have
	\begin{align*}
	 \left(\sum_{i=1}^d \|T_ix\|^2\right)^\frac{1}{2} 
		 \le\sqrt{d}\sup_{i=\{1,2,...,d\}}\Vert{T_ix}\Vert \le \sqrt{d}\sup_{i=\{1,2,...,d\}}\Vert{T_i}\Vert.
		\end{align*}
Taking supremum over $x\in \mathbb{S}^1$, we have
	\begin{align*}
\| \mathbf{T} \|  
&\le \sqrt{d}\sup_{i=\{1,2,...,d\}}\Vert{T_i}\Vert\\
&\leq\frac{2(2-q^2)}{q}\sqrt{d}\sup_{i=\{1,2,...,d\}}\omega_q(T_i),
	\end{align*}
where the last inequality follows from Theorem 2.1 \cite{fakhri2022q}. Therefore,
	\begin{align*} 
	\| \mathbf{T} \|& \le \frac{2\sqrt{d}(2-q^2)}{q}\sup_{i=\{1,2,...,d\}}\sup\limits_{(x,y)\in \mathbb{S}_q({\mathcal{H}})}\vert\langle{T_ix,y}\rangle\vert\\
		&\leq \frac{2\sqrt{d}(2-q^2)}{q}\sup\limits_{(x,y)\in \mathbb{S}_q({\mathcal{H}})}\sup_{i=\{1,2,...,d\}}\vert\langle{T_ix,y}\rangle\vert\\
		&=\frac{2\sqrt{d}(2-q^2)}{q}\sup\limits_{(x,y)\in \mathbb{S}_q({\mathcal{H}})}\|(\langle{T_1x,y}\rangle,\langle{T_2x,y}\rangle,...\langle{T_nx,y}\rangle)\|_{\infty} \\
		& \le \frac{2\sqrt{d}(2-q^2)}{q}\sup\limits_{(x,y)\in \mathbb{S}_q({\mathcal{H}})}\|(\langle{T_1x,y}\rangle,\langle{T_2x,y}\rangle,...\langle{T_nx,y}\rangle)\|_{2} \\
		&=\frac{2\sqrt{d}(2-q^2)}{q}Jt\omega_q(\mathbf{T}).
	\end{align*}
	Hence the required result holds.
\end{proof}
\begin{remark}
Here, it's important to note that there is an error in the calculation presented in Theorem 2.1 \cite{fakhri2022q}. Upon correcting this, if we proceed with the accurate calculations, Theorem 2.1 \cite{fakhri2022q} gives the following inequality 
\begin{equation*}
	\frac{q}{2}\|T\| \le w_q(T) \le \|T\|.
\end{equation*}
Accordingly, \eqref{ne1} reduces to
\begin{equation}\label{nb}
	\frac{q}{2\sqrt{d}}\Vert{\mathbf{T}}\Vert\leq Jt\omega(\mathbf{T})\leq\Vert{\mathbf{T}}\Vert.
\end{equation}
When we put $q=1$ in inequalities \eqref{ne1} and \eqref{nb}, we obtain the following inequality
\begin{center}
	$\frac{1}{2\sqrt{d}}\Vert{\mathbf{T}}\Vert\leq Jt\omega(\mathbf{T})\leq\Vert{\mathbf{T}}\Vert$,
\end{center}
 which is highlighted in Theorem 9 \cite{drnovvsek2014joint}.
\end{remark}

In the next theorem, we obtain an upper bound for $q$-joint numerical radius where $T_i's$ are $2 \times 2$ operator matrices. 
\begin{theorem}
Let $\mathbf{T}=(T_1,T_2,...T_d)$ and $T_i=\begin{pmatrix}
	P_i & Q_i \\
	R_i & S_i
\end{pmatrix}$, where $P_i,Q_i,R_i,S_i \in \mathcal{B(H)}$, $1 \le i \le d$. Then we have
\begin{align*}
&\max\{Jt\omega_q^2(\mathbf{P}),Jt\omega_q^2(\mathbf{S})\}\\
&\le 	Jt\omega_q^2(\mathbf{T})\\
&\le \sum_{i=1}^d \Big(\frac{|q|}{2}\left(\omega(P_i)+\omega(S_i)+\sqrt{\left(\omega(P_i)-\omega(S_i)\right)^2+(\|R_i\|+\|Q_i\|)^2}\right)\\
&	+\sqrt{1-|q|^2}\left(\|P_i\|^2+\|Q_i\|^2+\|R_i\|^2+\|S_i\|^2\right)^{\frac{1}{2}}
	\Big)^2,
\end{align*}
where $\mathbf{P}=(P_1,P_2,...,P_d)$, $\mathbf{S}=(S_1,S_2,...,S_d)$ and $|q|\le 1$.
\end{theorem}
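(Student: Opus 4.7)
The plan is to prove the two inequalities separately. For the lower bound I will use a simple ``block embedding'' argument that restricts admissible pairs $(x,y)\in\mathbb{S}_{q}(\mathcal{H}\oplus\mathcal{H})$ to vectors supported on a single summand, which reduces the computation to the diagonal blocks $P_i$ and $S_i$. For the upper bound I will use the standard decomposition of $y$ in terms of $x$ and a unit vector orthogonal to $x$ (the same one that underlies Tsing's description of $W_q$), split each $\langle T_i x,y\rangle$ into a ``numerical radius'' piece weighted by $|q|$ and a ``norm'' piece weighted by $\sqrt{1-|q|^2}$, and finally estimate each piece by known block-matrix inequalities.

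For the lower bound, fix $(x,y)\in\mathbb{S}_q(\mathcal{H})$ and consider $\widetilde{x}:=(x,0)$, $\widetilde{y}:=(y,0)\in\mathcal{H}\oplus\mathcal{H}$. Then $\|\widetilde x\|=\|\widetilde y\|=1$ and $\langle \widetilde x,\widetilde y\rangle=\langle x,y\rangle=q$, so $(\widetilde x,\widetilde y)\in\mathbb{S}_q(\mathcal{H}\oplus\mathcal{H})$. A direct computation of $T_i\widetilde x$ gives $\langle T_i\widetilde x,\widetilde y\rangle=\langle P_i x,y\rangle$, hence
\[
\sum_{i=1}^d|\langle P_i x,y\rangle|^2=\sum_{i=1}^d|\langle T_i\widetilde x,\widetilde y\rangle|^2\le Jt\omega_q^2(\mathbf{T}).
\]
Taking the supremum over $(x,y)\in\mathbb{S}_q(\mathcal{H})$ yields $Jt\omega_q^2(\mathbf{P})\le Jt\omega_q^2(\mathbf{T})$. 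An entirely analogous embedding $x\mapsto(0,x)$, $y\mapsto(0,y)$ yields $Jt\omega_q^2(\mathbf{S})\le Jt\omega_q^2(\mathbf{T})$, which together give the desired maximum.

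For the upper bound, fix $(x,y)\in\mathbb{S}_q(\mathcal{H}\oplus\mathcal{H})$. Since $\|x\|=\|y\|=1$ and $\langle x,y\rangle=q$, there exists a unit vector $z\perp x$ and, after absorbing a phase into $z$, a decomposition $y=\overline{q}\,x+\sqrt{1-|q|^2}\,z$. For each $i$ I expand
\[
\langle T_i x,y\rangle=q\langle T_i x,x\rangle+\sqrt{1-|q|^2}\,\langle T_i x,z\rangle,
\]
so the triangle inequality and $\|x\|=\|z\|=1$ give
\[
|\langle T_i x,y\rangle|\le|q|\,\omega(T_i)+\sqrt{1-|q|^2}\,\|T_i\|.
\]
Now I invoke two standard block-matrix estimates. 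For the numerical radius of $T_i=\bigl(\begin{smallmatrix}P_i & Q_i\\ R_i & S_i\end{smallmatrix}\bigr)$ the Hou--Du/Kittaneh-type bound yields
\[
\omega(T_i)\le\tfrac{1}{2}\Bigl(\omega(P_i)+\omega(S_i)+\sqrt{(\omega(P_i)-\omega(S_i))^2+(\|R_i\|+\|Q_i\|)^2}\,\Bigr),
\]
and for the operator norm a Cauchy--Schwarz estimate on $\|T_i(u,v)\|^2=\|P_iu+Q_iv\|^2+\|R_iu+S_iv\|^2$ gives $\|T_i\|\le(\|P_i\|^2+\|Q_i\|^2+\|R_i\|^2+\|S_i\|^2)^{1/2}$. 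Plugging these into the preceding inequality produces a bound for $|\langle T_i x,y\rangle|$ that is independent of $(x,y)$ and matches the $i$-th summand in the claimed upper bound. Squaring, summing over $i$, and taking the supremum over $(x,y)\in\mathbb{S}_q(\mathcal{H}\oplus\mathcal{H})$ completes the argument.

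The only subtle point is that the termwise estimate must be uniform in $(x,y)$ before the squared sum is taken; this is precisely why I bound each $|\langle T_i x,x\rangle|$ and $|\langle T_i x,z\rangle|$ through the global quantities $\omega(T_i)$ and $\|T_i\|$ rather than attempting a sharper pointwise analysis. The main ``obstacle'' is therefore not conceptual but notational: one must be careful to carry the $|q|$ and $\sqrt{1-|q|^2}$ factors through the algebra and to invoke the block-matrix numerical radius inequality in the correct form, after which the result follows cleanly.
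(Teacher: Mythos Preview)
Your lower-bound argument via the block embedding $x\mapsto(x,0)$, $y\mapsto(y,0)$ (and similarly for the second summand) is exactly what the paper does.

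For the upper bound your route is correct but genuinely different from the paper's. You work at the level of the full operator $T_i$: after the Tsing decomposition $y=\overline{q}\,x+\sqrt{1-|q|^2}\,z$ you bound $|\langle T_ix,y\rangle|\le|q|\,\omega(T_i)+\sqrt{1-|q|^2}\,\|T_i\|$ and then plug in the known block estimates for $\omega(T_i)$ and $\|T_i\|$. The paper instead first splits $\langle T_i\mathbf{x},\mathbf{y}\rangle$ into the four component terms $\langle P_ix_1,y_1\rangle,\langle Q_ix_2,y_1\rangle,\langle R_ix_1,y_2\rangle,\langle S_ix_2,y_2\rangle$, applies the Tsing decomposition componentwise ($y_j=\overline{q}\,x_j+\sqrt{1-|q|^2}\,z_j$), and then optimizes directly via the trigonometric substitution $\|x_1\|=\cos\theta$, $\|x_2\|=\sin\theta$, $\|z_1\|=\cos\phi$, $\|z_2\|=\sin\phi$. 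In effect the paper's computation re-derives from scratch the two block inequalities you quote, so its argument is self-contained but longer; your modular approach is cleaner and makes transparent that the $|q|$-term is governed by a numerical-radius block bound and the $\sqrt{1-|q|^2}$-term by a norm block bound. One minor remark: the phrase ``after absorbing a phase into $z$'' is unnecessary, since $y-\overline{q}\,x$ already has norm $\sqrt{1-|q|^2}$ and is orthogonal to $x$, so $z$ is determined (up to an arbitrary choice when $|q|=1$).
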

\begin{proof}
Let $\mathbf{x}=\begin{pmatrix}
		x_1\\
		x_2
	\end{pmatrix}$, $\mathbf{y}=\begin{pmatrix}
		y_1\\
		y_2
	\end{pmatrix} \in \mathcal{S}_q (\mathcal{H} \oplus \mathcal{H}).$ Thus
\begin{equation*}
	Jt\omega_q^2(\mathbf{T})=\sup_{(\mathbf{x},\mathbf{y})\in \mathcal{S}_q(\mathcal{H} \oplus \mathcal{H})}\sum_{i=1}^d|\langle T_i \mathbf{x}, \mathbf{y}\rangle|^2.
\end{equation*}
In particular, let
$\mathbf{x}=\begin{pmatrix}
		x_1\\
		0
	\end{pmatrix}$, $\mathbf{y}=\begin{pmatrix}
		y_1\\
		0
	\end{pmatrix} \in \mathcal{H} \oplus \mathcal{H}$, where $\|\mathbf{x}\|=\|\mathbf{y}\|=1$ and $\langle \mathbf{x}, \mathbf{y} \rangle=x_1,y_1=q$.
Therefore,
\begin{align*}
	Jt\omega_q^2(\mathbf{T}) \ge &\sum_{i=1}^{d}\left| \big< \begin{pmatrix}
		P_i & Q_i \\
		R_i & S_i
	\end{pmatrix}\begin{pmatrix}
		x_1\\
		0
	\end{pmatrix},\begin{pmatrix}
		y_1\\
		0
	\end{pmatrix}\big> \right|^2\\
=&\sum_{i=1}^{d}\left|\langle P_i x_1, y_1 \rangle \right |^2.
\end{align*}
Taking supremum over all $x_1$ and $y_1$ such that $(x_1,y_1) \in \mathcal{S}_q$, we have
\begin{equation*}
		Jt\omega_q^2(\mathbf{T}) \le Jt\omega_q^2(\mathbf{P}).
\end{equation*}
Similarly, it can be obtained that
$	Jt\omega_q^2(\mathbf{T}) \ge Jt\omega_q^2(\mathbf{S}) $.
Thus, $$\max\{Jt\omega_q^2(\mathbf{P}),Jt\omega_q^2(\mathbf{S})\}\le	Jt\omega_q^2(\mathbf{T}).$$
Now, let $\mathbf{x}, \mathbf{y} \in \mathcal{S}_q$, we have
\begin{align*}
|\langle T_i \mathbf{x}, \mathbf{y}\rangle|=&\left| \big< \begin{pmatrix}
	P_i & Q_i \\
	R_i & S_i
\end{pmatrix}\begin{pmatrix}
x_1\\
x_2
\end{pmatrix},\begin{pmatrix}
y_1\\
y_2
\end{pmatrix}\big> \right|\\
\le & |\langle P_ix_1,y_1 \rangle|+|\langle Q_ix_2,y_1 \rangle|+|\langle R_ix_1,y_2 \rangle|+|\langle S_ix_2,y_2 \rangle|.
\end{align*}
This implies,
\begin{align*}
Jt\omega_q^2(\mathbf{T})=&\sup_{(\mathbf{x},\mathbf{y})\in \mathcal{S}_q (\mathcal{H} \oplus \mathcal{H})}\sum_{i=1}^d|\langle T_i \mathbf{x}, \mathbf{y}\rangle|^2\\
\le & \sup_{(\mathbf{x},\mathbf{y})\in \mathcal{S}_q (\mathcal{H} \oplus \mathcal{H})}\sum_{i=1}^d \left( |\langle P_ix_1,y_1 \rangle|+|\langle Q_ix_2,y_1 \rangle|+|\langle R_ix_1,y_2 \rangle|+|\langle S_ix_2,y_2 \rangle|\right)^2\\
 \le & \sum_{i=1}^d \left( \sup_{(\mathbf{x},\mathbf{y})\in \mathcal{S}_q (\mathcal{H} \oplus \mathcal{H})}\left(|\langle P_ix_1,y_1 \rangle|+|\langle Q_ix_2,y_1 \rangle|+|\langle R_ix_1,y_2 \rangle|+|\langle S_ix_2,y_2 \rangle|\right)\right)^2.\\
\end{align*}
Take $\mathbf{y}=\overline{q}\mathbf{x}+\sqrt{1-|q|^2}\mathbf{z}$,  where $\mathbf{z}=\begin{pmatrix}
	z_1\\
	z_2
\end{pmatrix}$, $\|\mathbf{z}\|=1$, $\langle \mathbf{x}, \mathbf{z} \rangle=0$. Thus, $y_1=\overline{q}x_1+\sqrt{1-|q|^2}z_1$ and $y_2=\overline{q}x_2+\sqrt{1-|q|^2}z_2$.
Suppose 
\begin{equation*}
\mu_i =|\langle P_ix_1,y_1 \rangle|+|\langle Q_ix_2,y_1 \rangle|+|\langle R_ix_1,y_2 \rangle|+|\langle S_ix_2,y_2 \rangle|.
\end{equation*}
Thus,
\begin{align*}
 \mu_i = & |\langle P_ix_1,\overline{q}x_1+\sqrt{1-|q|^2}z_1 \rangle|
+|\langle Q_ix_2,\overline{q}x_1+\sqrt{1-|q|^2}z_1 \rangle|\\
+&|\langle R_ix_1,\overline{q}x_2+\sqrt{1-|q|^2}z_2 \rangle|+|\langle S_ix_2,\overline{q}x_2+\sqrt{1-|q|^2}z_2 \rangle|\\
\le & |q|\left(|\langle P_i x_1, x_1 \rangle|+|\langle Q_i x_2, x_1 \rangle|+|\langle R_i x_1, x_2 \rangle|+|\langle S_i x_2, x_2 \rangle|\right)\\
+& \sqrt{1-|q|^2}\left(|\langle P_i x_1, z_1 \rangle|+|\langle Q_i x_2, z_1 \rangle|+|\langle R_i x_1, z_2 \rangle|+|\langle S_i x_2, z_2 \rangle|\right).
\end{align*}
By using definition of $\omega(T)$ and Cauchy-Schwarz inequality, we have
\begin{align*}
\mu_i \le &	|q|\left(\omega(P_i)\|x_1\|^2+\omega(S_i)\|x_2\|^2+(\|Q_i\|+\|R_i\|)\|x_1\|\|x_2\|\right)\\
+& \sqrt{1-|q|^2}\left(\|P_i\| \|x_1\|\| z_1\| +\|Q_i\| \|x_2\| \|z_1\| +\| R_i\| \|x_1\| \|z_2\| +\|S_i\| \|x_2\|\| z_2\|\right).
\end{align*}
 Putting $\|x_1\|=\cos\theta$, $\|x_2\|=\sin\theta$, $\|z_1\|=\cos\phi$ and $\|z_2\|=\sin\phi$, we obtain that
 \begin{align*}
 \sup_{(\mathbf{x},\mathbf{y})\in \mathcal{S}_q}\mu_i \le & |q|\left(\omega(P_i)\cos^2\theta+\omega(S_i)\sin^2\theta+(\|Q_i\|+\|R_i\|)\cos\theta\sin\theta \right)\\
 +& \sqrt{1-|q|^2}(\|P_i\| \cos\theta\cos\phi +\|Q_i\|\sin\theta\cos\phi \\
 +&\| R_i\| \cos\theta\sin\phi +\|S_i\| \sin\theta\sin\phi)\\
 \le &\frac{|q|}{2}\left(\omega(P_i)+\omega(S_i)+\sqrt{(\omega(P_i)-\omega(S_i))^2+(\|R_i\|+\|Q_i\|)^2}\right)\\
 +&\sqrt{1-|q|^2}\left(\|P_i\|^2+\|Q_i\|^2+\|R_i\|^2+\|S_i\|^2\right)^{\frac{1}{2}}.
 \end{align*}
Therefore,
\begin{align*}
Jt\omega_q^2(\mathbf{T})
\le&\sum_{i=1}^d \Big(\frac{|q|}{2}\left(\omega(P_i)+\omega(S_i)+\sqrt{\left(\omega(P_i)-\omega(S_i)\right)^2+\left(\|R_i\|+\|Q_i\|\right)^2}\right)\\
+&\sqrt{1-|q|^2}\left(\|P_i\|^2+\|Q_i\|^2+\|R_i\|^2+\|S_i\|^2\right)^{\frac{1}{2}}
\Big)^2.
\end{align*}
Hence, the required result holds.
\end{proof}

On the basis of above theorem, we can obtain the following remarks easily.
\begin{remark}
\begin{itemize}
Let $P,Q,R,S \in \mathcal{B(H)}$. Then, for all $q \in \mathbb{C}$ such that $|q|\le 1$, we have 
\begin{align*}
\max\{\omega_q({P}),\omega_q({Q})\} \le& 	\omega_q\begin{pmatrix}
	P & Q \\
	R & S
\end{pmatrix}\\
 \le & \frac{|q|}{2}\left(\omega(P)+\omega(S)+\sqrt{(\omega(P)-\omega(S))^2+\left(\|R\|+\|Q\|\right)^2}\right)\\
	+&\sqrt{1-|q|^2}\left(\|P\|^2+\|Q\|^2+\|R\|^2+\|S\|^2
	\right)^{\frac{1}{2}}.
\end{align*}
\item [(ii)] Let ${T}_i=\begin{pmatrix}
	P_i & Q_i \\
	0 & 0
\end{pmatrix}$ be such that $P_i,Q_i \in \mathcal{B(H)}$ for all $1 \le i \le d$. Then, for all $q \in \mathbb{C}$ such that $|q|\le 1$, we have 
\begin{align*}
	Jt\omega_q^2(\mathbf{P}) \le &	Jt\omega_q^2(\mathbf{T})\\
	\le& \sum_{i=1}^d \left(\frac{|q|}{2} \left(\omega(P_i)+\sqrt{\omega^2(P_i)+\|Q_i\|^2}\right)
	+\sqrt{1-|q|^2}\left(\|P_i\|^2+\|Q_i\|^2\right)^{\frac{1}{2}}
	\right)^2.
\end{align*}
In particular, if $P,Q \in \mathcal{B(H)}$ and ${T}=\begin{pmatrix}
	P& Q \\
	0 & 0
\end{pmatrix}$, then
\begin{equation}\label{ri}
	\omega_q(P) \le \omega_q({T}) \le  \frac{|q|}{2} \left(\omega(P)+\sqrt{\omega^2(P)+\|Q\|^2}\right)
	+\sqrt{1-|q|^2}\left(\|P\|^2+\|Q\|^2\right)^{\frac{1}{2}}.
\end{equation}
Right inequality in \eqref{ri} generalizes and refines the following inequality
\begin{equation*}
	\omega\begin{pmatrix}
		P & Q \\
		0 & 0
	\end{pmatrix} \le \omega(P) + \frac{\|Q\|}{2};~ P,Q \in \mathcal{B(H)},
\end{equation*} 
which is obtained by Omar Hirzallah, Fuad Kittaneh and
Khalid Shebrawi in \cite{hirzallah2011numerical}.
\item [(iii)] Let ${T}_i=\begin{pmatrix}
	P_i & 0 \\
	0 & S_i
\end{pmatrix}$ be such that $P_i,S_i \in \mathcal{B(H)}$ for all $1 \le i \le d$. Then, for all $q \in \mathbb{C}$ such that $|q|\le 1$, we have
\begin{align*}
\max\{Jt\omega_q^2(\mathbf{P}),Jt\omega_q^2(\mathbf{S})\} \le &	Jt\omega_q^2(\mathbf{T})\\
 \le &\sum_{i=1}^d \left(|q|\max \{\omega(P_i), \omega(Q_i) \}
	+\sqrt{1-|q|^2}\left(\|P_i\|^2+\|S_i\|^2\right)^{\frac{1}{2}}
	\right)^2.
\end{align*}
In particular, if $P,S \in \mathcal{B(H)}$ and ${T}=\begin{pmatrix}
	P & 0 \\
	0 & S
\end{pmatrix}$, then
\begin{equation}\label{la}
\max \{\omega_q({P}),\omega_q({S})\} \le 	\omega_q({T}) \le  |q|\max \{\omega(P), \omega(Q) \}
	+\sqrt{1-|q|^2}\left(\|P\|^2+\|S\|^2\right)^{\frac{1}{2}}.
\end{equation}
For $q=1$, \eqref{la} gives us the following well-known inequality
\begin{equation*}
	\omega\begin{pmatrix}
		P & 0\\
		0 & S
	\end{pmatrix}= \max\{\omega(P),\omega(Q)\}.
\end{equation*}
\item [(iv)]  Let ${T}_i=\begin{pmatrix}
	0 & Q_i \\
	R_i & 0
\end{pmatrix}$ be such that $Q_i,R_i \in \mathcal{B(H)}$ for all $1 \le i \le d$. Then, for all $q \in \mathbb{C}$ such that $|q|\le 1$, we have
\begin{align*}
	Jt\omega_q^2(\mathbf{T}) \le &\sum_{i=1}^d \left(\frac{|q|}{2}\left(\|R_i\|+\|Q_i\|\right)
	+\sqrt{1-|q|^2}\left(\|Q_i\|^2+\|R_i\|^2\right)^{\frac{1}{2}}
	\right)^2.
\end{align*}
 In particular, if $Q,R \in \mathcal{B(H)}$ and ${T}=\begin{pmatrix}
	0 & Q \\
	R & 0
\end{pmatrix}$, then
\begin{align*}
	\omega_q({T})\le & \frac{|q|}{2}
	\left(\|R\|+\|Q\|\right)
	+\sqrt{1-|q|^2}\left(\|Q\|^2+\|R\|^2\right)^{\frac{1}{2}}
	.
\end{align*}
\item [(v)]  Let ${T}_i=\begin{pmatrix}
	\pm P_i & \pm Q_i \\
	\pm Q_i & 	\pm P_i
\end{pmatrix}$ be such that $P_i,Q_i \in \mathcal{B(H)}$ for all $1 \le i \le d$. Then, for all $q \in \mathbb{C}$ such that $|q|\le 1$, we have
\begin{align*}
	Jt \omega ^2(\mathbf{P}) \le Jt\omega_q^2(\mathbf{T}) \le &\sum_{i=1}^d \left(|q|\left(\omega(P_i)+\|Q_i\|\right)
	+2\sqrt{1-|q|^2}\left(\|P_i\|^2+\|Q_i\|^2\right)^{\frac{1}{2}}
	\right)^2.
\end{align*}
In particular, If $P,Q \in \mathcal{B(H)}$ and ${T}=\begin{pmatrix}
	\pm P & \pm Q \\
	\pm Q & 	\pm P
\end{pmatrix}$, then
\begin{align*}
	\omega_q (P) \le \omega_q(T) \le & |q|\left(\omega(P)+\|Q\|\right)
	+2\sqrt{1-|q|^2}\left( \|P\|^2+\|Q\|^2\right)^{\frac{1}{2}}
	.
\end{align*}
\end{itemize}
\end{remark}

\section{$A$-$q$-Numerical Range and Convexity}
In this section, our objective is to extend and generalize the concept of the $q$-numerical range to the context of semi-Hilbert spaces. However, before delving into that, it is necessary to revisit some fundamental aspects of semi-Hilbert spaces. From a positive operator $A \in \mathcal{B(H)}$, we can obtain the following positive semi-definite sesquilinear form on $\mathcal{H}$:
\begin{equation*}
\langle x, y \rangle_{A}=\langle Ax, y \rangle \ \  \textup{ for all } x,y\in \mathcal{H}.
\end{equation*}
We denote by $\Vert \cdot \Vert_{A}$ the seminorm induced by $\langle \cdot, \cdot \rangle_{A}$, i.e., $\Vert x \Vert_{A}=\sqrt{\langle x, x \rangle_{A}}$ for all $x\in \mathcal{H}$.  A straightforward calculation shows that   $\Vert \cdot \Vert_{A}$  is a norm if and only if $A$ is injective, and that $\Big( \mathcal{H}, \Vert \cdot \Vert_{A} \Big)$ is a complete space if and only if  the range $\mathcal{R}(A)$ of $A$ is closed in $\mathcal{H}$. Further, we can easily check that   $\langle \cdot, \cdot \rangle_{A}$ satisfies the   Cauchy-Schwarz inequality:
\begin{equation*}
\vert \langle x, y \rangle_{A} \vert \leq \Vert x \Vert_{A} \Vert y \Vert_{A} \ \ (x,y \in \mathcal{H}).
\end{equation*}

Let $T\in \mathcal{B(H)}$,  an operator $R\in \mathcal{B(H)}$ is said to be $A$-adjoint of $T$ if for every $x,y\in \mathcal{H}$, we have
$\langle Tx,y \rangle_{A}=\langle x,Ry\rangle_{A}.$ In other words, $R$ is an $A$-adjoint of $T$ if and only if  $AR=T^{*}A$. This kind of equation is studied by Douglas in \cite{r.dog}. In general, the existence and uniqueness of $A$-adjoint are not guaranteed;  see  \cite{r.kf6}. We denote by $\mathcal{B}_{A}(\mathcal{H})$ the set of all operators in $\mathcal{B(H)}$ which admit  $A$-adjoints. From Douglas Theorem \cite{r.dog}, we have
\begin{equation*}
\mathcal{B}_{A}\mathcal{(H)}=\{ T \in \mathcal{B(H)} : \mathcal{R}(T^{*}A)\subseteq \mathcal{R}(A) \}.
\end{equation*}

Recall that an operator $T\in \mathcal{B(H)}$  is said to be $A$-bounded if there exists a positive real number $\xi$ such that $\Vert Tx \Vert_{A} \leq \xi \Vert x \Vert_{A}$ for all $x\in \mathcal{H}$. Applying Douglas's theorem \cite{r.dog} again, we find that the set of all $A$-bounded operators in $\mathcal{B(\mathcal{H})}$ is exactly $\mathcal{B}_{A^{\frac{1}{2}}}(\mathcal{H})$, in other words,
\begin{equation*}
\mathcal{B}_{A^{\frac{1}{2}}}(\mathcal{H})=\{ T \in \mathcal{B(H)} : \exists \xi >0,  \Vert Tx \Vert_{A} \leq \xi \Vert x \Vert_{A}, \forall x\in \mathcal{H} \}.
\end{equation*}

 Notice that the spaces $\mathcal{B}_{A}\mathcal{(H)}$ and  $\mathcal{B}_{A^{\frac{1}{2}}}(\mathcal{H})$  are two  subalgebras of $\mathcal{B(H)}$ that are  neither closed nor dense in $\mathcal{B(H)}$, see \cite[p.1463]{r.lacg1}.  Furthermore, we have the following chain of inclusions
 \begin{equation*}
  \mathcal{B}_{A}\mathcal{(H)} \subseteq \mathcal{B}_{A^{\frac{1}{2}}}(\mathcal{H}) \subseteq \mathcal{B(H)},
 \end{equation*}
with equality if $\mathcal{N}(A)=\{ 0 \}$ and $\overline{\mathcal{R}(A)}=\mathcal{R}(A)$, where $\mathcal{N}(A)$ is the kernel  of $A$.

 If $T\in \mathcal{B}_{A}(\mathcal{H})$, the reduced solution of the equation $AX = T^{*}A$ is a distinguished $A$-adjoint operator of $T$, which is denoted by $T^{\sharp_{A}}$ and satisfies $\mathcal{R}(T^{\sharp_{A}}) \subseteq \overline{\mathcal{R}(A)}$. Note that, $T^{\sharp_{A}} = A^{\dagger} T^{*}A$,  where  $A^{\dagger}$ is the Moore-Penrose inverse of $A$; see \cite{r.lacg}.  It is important to note that, if $T\in \mathcal{B}_{A}(\mathcal{H})$ then $T^{\sharp_{A}} \in \mathcal{B}_{A}\mathcal{(H)}$,  $\big(\big(T^{\sharp_{A}}\big)^{\sharp_{A}} \big)^{\sharp_{A}}=T^{\sharp_{A}}$ and  $\big(T^{\sharp_{A}}\big)^{\sharp_{A}}=PTP$, where $P$ represents  the orthogonal projection onto $\overline{\mathcal{R}(A)}$. Moreover,  we have $\big(TS\big)^{\sharp_{A}}=S^{\sharp_{A}} T^{\sharp_{A}}$ and $\big(T+\alpha S\big)^{\sharp_{A}}=T^{\sharp_{A}}+ \overline{\alpha} S^{\sharp_{A}}$ for all $T, S \in \mathcal{B}_{A}(\mathcal{H})$ and all $\alpha\in \mathbb{C}$, where $\overline{\alpha}$ is the conjugate of the complex number $\alpha$. If no confusion arises, we  simply write  $T^{\sharp}$ instead of $T^{\sharp_{A}}$.

In the sequel, $q$ is a complex number such that $\vert q \vert \leq 1.$ For a subspace $\mathcal{V}$ of $\mathcal{H}$, we denote by $\mathbb{S}_{q, A}(\mathcal{V})$ the subset of $\mathcal{V} \times \mathcal{V}$ given by
\begin{equation*}
\mathbb{S}_{q, A}(\mathcal{V})=\left\{ (x,y)\in \mathcal{V}\times \mathcal{V} : \Vert x \Vert_{A}=\Vert y \Vert_{A}=1 \textup{ and } \langle x, y \rangle_{A}=q \right\}.
\end{equation*}

\begin{definition}
Let $T\in \mathcal{B(H)}$.  The $q$-$A$-numerical range of $T$ is the subset of $\mathbb{C}$ defined by
\begin{equation*}
W_{q,A}(T)=\left \{ \langle Tx, y \rangle_{A} : (x,y) \in \mathbb{S}_{q}(\mathcal{H}) \right\}.
\end{equation*}

\end{definition}

\begin{proposition}
Let $T\in \mathcal{B(H)}$. Then, we have the following assertions.
\begin{enumerate}
\item $W_{e^{i \theta} q,A}(T)=e^{i\theta}W_{q , A}(T)$ for all $\theta \in \mathbb{R}$. In particular, $W_{ q,A}(T)=W_{\vert q \vert , A}(T)$.
\item $W_{1, A}(T)= W_{A}(T)$.
\item If $\dim \mathcal{R}(A)=1$, then  $W_{q,A}(T)\neq  \emptyset$  if and only if $\vert q \vert=1$.
\item If $T \in \mathcal{B}_{A}(\mathcal{H})$, then  $W_{q, A}(T^{\sharp})=\{ \overline{\xi} : \xi \in W_{q,A}(T) \}$.
\end{enumerate}

\end{proposition}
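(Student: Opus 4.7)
The plan is to verify each of the four assertions separately, working directly from the definition of $W_{q,A}(T)$ together with basic properties of the semi-inner product $\langle\cdot,\cdot\rangle_A$.

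For assertion (1), I would exploit the rotation map $(x,y)\mapsto(x,e^{-i\theta}y)$. Since $\|e^{-i\theta}y\|_A=\|y\|_A$ and $\langle x,e^{-i\theta}y\rangle_A=e^{i\theta}\langle x,y\rangle_A$, this map gives a bijection between $\mathbb{S}_{q,A}(\mathcal{H})$ and $\mathbb{S}_{e^{i\theta}q,A}(\mathcal{H})$. The identity $\langle Tx,e^{-i\theta}y\rangle_A=e^{i\theta}\langle Tx,y\rangle_A$ then yields $W_{e^{i\theta}q,A}(T)=e^{i\theta}W_{q,A}(T)$, and the ``in particular'' clause follows upon writing $q=e^{i\arg q}|q|$.

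For assertion (2), the key is to show that the constraints of $\mathbb{S}_{1,A}(\mathcal{H})$ force $(x,y)$ to coincide modulo the kernel of $A$. A direct expansion gives
$$\|x-y\|_A^2=\|x\|_A^2-2\Re\langle x,y\rangle_A+\|y\|_A^2=1-2+1=0,$$
and since $\|v\|_A^2=\|A^{1/2}v\|^2$ this forces $A^{1/2}(x-y)=0$, hence $A(x-y)=0$. Using self-adjointness of $A$,
$$\langle Tx,y\rangle_A-\langle Tx,x\rangle_A=\langle ATx,y-x\rangle=\langle Tx,A(y-x)\rangle=0,$$
so $\langle Tx,y\rangle_A=\langle Tx,x\rangle_A\in W_A(T)$. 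The reverse inclusion is immediate by taking $y=x$.

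For assertion (3), when $\dim\mathcal{R}(A)=1$ I would write $A=\alpha\,v\otimes v^*$ with $\|v\|=1$ and $\alpha>0$. A direct calculation then yields $\|x\|_A^2=\alpha|\langle x,v\rangle|^2$ and $\langle x,y\rangle_A=\alpha\langle x,v\rangle\overline{\langle y,v\rangle}$. The constraints $\|x\|_A=\|y\|_A=1$ force $|\langle x,v\rangle|=|\langle y,v\rangle|=1/\sqrt\alpha$, so $|\langle x,y\rangle_A|=1$, which forces $|q|=1$. Conversely, given $|q|=1$, the explicit pair $x=v/\sqrt\alpha$, $y=\overline{q}\,v/\sqrt\alpha$ lies in $\mathbb{S}_{q,A}(\mathcal{H})$.

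For assertion (4), the defining property of $T^{\sharp}$ gives $\langle T^{\sharp}x,y\rangle_A=\overline{\langle Ty,x\rangle_A}$. When $(x,y)\in\mathbb{S}_{q,A}(\mathcal{H})$ the pair $(y,x)$ lies in $\mathbb{S}_{\overline{q},A}(\mathcal{H})$, so $\langle Ty,x\rangle_A\in W_{\overline{q},A}(T)$; taking conjugates and invoking assertion (1) to convert between $W_{\overline{q},A}(T)$ and $W_{q,A}(T)$ then delivers the stated identity. The main obstacle I anticipate is in assertion (2), where one must carefully justify that the Cauchy--Schwarz equality case in the seminorm setting still yields $A(x-y)=0$, since the usual Hilbert space argument relies on injectivity of the norm. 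Once that step and the rotation identity of (1) are in hand, everything else is essentially bookkeeping.
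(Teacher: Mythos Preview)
Your arguments for (1), (2) and (3) are correct. The paper itself only proves (3), declaring the remaining items obvious; for (3) your approach (writing $A=\alpha\,v\otimes v^*$) is essentially the same as the paper's, which instead writes $A^{1/2}x=\alpha u$, $A^{1/2}y=\beta u$ for a unit vector $u$ spanning $\mathcal{R}(A)=\mathcal{R}(A^{1/2})$ and then reads off $|q|=|\alpha\overline{\beta}|=1$.

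There is, however, a genuine gap in your treatment of (4). You correctly obtain
\[
W_{q,A}(T^{\sharp})=\{\overline{\xi}:\xi\in W_{\overline{q},A}(T)\},
\]
but the claim that assertion (1) then converts this into $\{\overline{\xi}:\xi\in W_{q,A}(T)\}$ is unjustified: the rotation identity gives $W_{\overline{q},A}(T)=e^{-2i\arg q}\,W_{q,A}(T)$, and complex conjugation does not cancel this factor. For non-real $q$ the two sets $\{\overline{\xi}:\xi\in W_{\overline{q},A}(T)\}$ and $\{\overline{\xi}:\xi\in W_{q,A}(T)\}$ differ by a rotation through $2\arg q$, so no amount of bookkeeping will bridge them. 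The identity you actually established, with $\overline{q}$ on the right, is the correct one and matches the analogous formula $JtW_{\overline{q}}(\mathbf{T}^*)=\{\overline{\xi}:\xi\in JtW_q(\mathbf{T})\}$ stated earlier in the paper; item (4) as printed appears to contain a typo. The same remark applies to the ``in particular'' clause of (1): what genuinely follows from the first part is $W_{q,A}(T)=e^{i\arg q}\,W_{|q|,A}(T)$, not literal equality of the two ranges.
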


\begin{proof}
We will only show statement $(3)$, since the others are obvious.

 The `if' follows directly from the statement $(2)$. For the  `only if',  suppose that there exists a complex number  $\xi$ which belongs to $W_{q,A}(T)$.  Then there  exists $(x,y) \in \mathbb{S}_{q, A}(\mathcal{H})$ such that $\xi=\langle Tx, y \rangle_{A}$. Since  $\dim \mathcal{R}(A)=1$, there exists  $u \in \mathcal{H}$ such that $\Vert u \Vert=1$ and  $\mathcal{R}(A)$ equal to  the subspace spanned by $u$, moreover, we have that $\mathcal{R}(A)=\mathcal{R}(A^{\frac{1}{2}})$. This implies that there exists $\alpha , \beta \in \mathbb{C}$ such that $A^{\frac{1}{2}}x=\alpha u$ and $A^{\frac{1}{2}}y=\beta u$. Hence we get that   $\vert \alpha \vert = \vert \beta \vert= 1$ and $\vert \alpha \beta\vert   = \vert q \vert=1$. This completes the proof.
\end{proof}
We require the following Lemma in order to demonstrate the convexity of $W_{q,A}(T)$, which is the primary result of this section.

\begin{lemma}[\cite{r.wagschal}, p.444]\label{lem2}
Let $\left(E, \varphi(\cdot, \cdot) \right)$ be a pre-Hilbertian space with $\varphi(\cdot, \cdot)$ is the inner product on $E$. For any finite-dimensional subspace  $F$ of $E$, we have
\begin{enumerate}
\item $E=F\oplus F^{\bot_{\varphi}}$, where $F^{\bot_{\varphi}}$ is the  orthogonal complement of $F$ with respect the inner product $\varphi(\cdot, \cdot)$, i.e.  $F^{\bot_{\varphi}}=\{ x\in E: \varphi(x, y)=0, \forall y\in F\}$;
\item the mapping $\mathcal{Q}_{F} : E \rightarrow F$ given by  $\mathcal{Q}_{F}\left(x_{F}+x_{F^{\bot_{\varphi}}}\right)=x_{F}$  is the  orthogonal projection of $E$ on $F$.
\end{enumerate}
\end{lemma}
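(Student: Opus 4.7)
The plan is to prove this as a standard decomposition theorem for finite-dimensional subspaces in pre-Hilbert spaces, relying on Gram--Schmidt orthonormalization as the key tool. The finite-dimensionality of $F$ is essential, since for infinite-dimensional subspaces the decomposition $E = F \oplus F^{\bot_{\varphi}}$ can fail in a non-complete pre-Hilbert space.

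First I would fix a basis of $F$ and apply the Gram--Schmidt procedure relative to $\varphi(\cdot,\cdot)$ to obtain an orthonormal basis $\{e_1, \ldots, e_n\}$ of $F$. For an arbitrary $x \in E$, I would then define
\begin{equation*}
x_F := \sum_{i=1}^{n} \varphi(x, e_i)\, e_i, \qquad x_{F^{\bot_{\varphi}}} := x - x_F.
\end{equation*}
Clearly $x_F \in F$. A direct computation using orthonormality gives $\varphi(x_{F^{\bot_{\varphi}}}, e_j) = \varphi(x, e_j) - \varphi(x, e_j) = 0$ for each $j$, and by linearity $\varphi(x_{F^{\bot_{\varphi}}}, y) = 0$ for all $y \in F$, which shows $x_{F^{\bot_{\varphi}}} \in F^{\bot_{\varphi}}$. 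Hence $E = F + F^{\bot_{\varphi}}$.

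For the directness of the sum, I would observe that if $y \in F \cap F^{\bot_{\varphi}}$, then $\varphi(y,y) = 0$, which forces $y = 0$ by positive definiteness of $\varphi$. This establishes $(1)$, and in particular the decomposition $x = x_F + x_{F^{\bot_{\varphi}}}$ is unique, so the map $\mathcal{Q}_F$ in $(2)$ is well-defined and linear. To identify $\mathcal{Q}_F$ as the orthogonal projection onto $F$, I would verify the two defining properties: $\mathcal{Q}_F(E) \subseteq F$ (immediate) and $\varphi(x - \mathcal{Q}_F(x), y) = \varphi(x_{F^{\bot_{\varphi}}}, y) = 0$ for all $y \in F$, which is the characterization of orthogonal projection.

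The only subtle point — and the step I would highlight as the main obstacle — is that $E$ is merely pre-Hilbertian, not necessarily complete, so one cannot invoke the standard Hilbert space projection theorem directly. The finite-dimensionality of $F$ bypasses this obstacle, since it guarantees the existence of an orthonormal basis via Gram--Schmidt (a purely algebraic construction that requires neither completeness nor separability), and the explicit formula for $x_F$ delivers the decomposition without any limiting argument.
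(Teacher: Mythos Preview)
Your argument is correct and is the standard proof of this classical fact: Gram--Schmidt on a basis of the finite-dimensional subspace $F$, followed by the explicit projection formula $x_F = \sum_i \varphi(x,e_i)e_i$, with directness coming from positive definiteness of $\varphi$. You have also correctly identified the key point, namely that finite-dimensionality of $F$ replaces the completeness hypothesis that the usual projection theorem would require.

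Note, however, that the paper does not supply its own proof of this lemma: it is quoted verbatim from \cite{r.wagschal}, p.~444, and used as a black box in the proof of Theorem~\ref{thm21prin}. So there is no ``paper's proof'' to compare against; your write-up simply fills in what the paper takes for granted by citation.
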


An immediate consequence of Lemma \ref{lem2} is the following result.

\begin{corollary}\label{corl4}
Let $\left(E, \varphi(\cdot, \cdot) \right)$ be a pre-Hilbertian space with $\varphi(\cdot, \cdot)$ is the inner product on $E$. If the dimension of $E$ is strictly greater than $1$, then  for every $x\in E$, there exists $y\in E\setminus \{0\}$ such that $\varphi(x, y)=0$.
\end{corollary}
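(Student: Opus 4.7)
My plan is to reduce this immediately to Lemma \ref{lem2} by choosing the finite-dimensional subspace $F$ to be the span of $x$, then use a dimension count to ensure the orthogonal complement is nontrivial.

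First I would dispose of the trivial case: if $x=0$, then $\varphi(x,y)=0$ automatically for every $y$, so since $\dim E > 1$ in particular $E \neq \{0\}$, any nonzero $y \in E$ does the job.

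Assume then that $x \neq 0$, and set $F := \mathrm{Sp}\{x\}$, a one-dimensional subspace of $E$. Lemma \ref{lem2}(i) applies (with the finite-dimensional subspace $F$), giving the decomposition $E = F \oplus F^{\bot_{\varphi}}$. A dimension count now forces $F^{\bot_{\varphi}} \neq \{0\}$: if we had $F^{\bot_{\varphi}} = \{0\}$, then the direct sum would yield $\dim E = \dim F = 1$, contradicting the hypothesis $\dim E > 1$. Hence I can pick any $y \in F^{\bot_{\varphi}} \setminus \{0\}$, and by definition of $F^{\bot_{\varphi}}$ and the fact that $x \in F$, we get $\varphi(x,y) = 0$, as required.

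There is no genuine obstacle here: the statement is essentially a restatement of Lemma \ref{lem2} for one-dimensional $F$, combined with the observation that $\dim E > 1$ prevents the orthogonal complement from collapsing. The only minor subtlety worth flagging is that one must not assume $E$ is complete or finite-dimensional, but since Lemma \ref{lem2} is invoked with the \emph{finite-dimensional} subspace $F = \mathrm{Sp}\{x\}$, no completeness assumption on $E$ is needed.
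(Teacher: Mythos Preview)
Your proof is correct and matches the paper's approach exactly: the paper states this corollary as ``an immediate consequence of Lemma~\ref{lem2}'' without further argument, and your reduction via $F=\mathrm{Sp}\{x\}$ together with the dimension count is precisely how that consequence is obtained.
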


In what follows, we assume that  $\dim \mathcal{R}(A)\geq 2$.
\begin{theorem}\label{thm21prin}
Let $T\in \mathcal{B}(\mathcal{H})$. Then $W_{q,A}(T)$ is convex.
\end{theorem}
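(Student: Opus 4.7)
The plan is to adapt Tsing's classical proof of the convexity of $W_q(T)$ to the semi-inner product setting, with Corollary \ref{corl4} supplying the semi-Hilbert substitute for ``picking a unit vector orthogonal to a given one.'' First I would reparametrize the pairs: for $(x,y)\in\mathbb{S}_{q,A}(\mathcal{H})$ with $|q|<1$, the vector $w:=y-\bar q\,x$ satisfies $\langle x,w\rangle_A=0$ and $\|w\|_A^{2}=1-|q|^{2}$ by a direct computation using $\langle x,y\rangle_A=q$ and $\|x\|_A=\|y\|_A=1$. Setting $z:=w/\sqrt{1-|q|^{2}}$, I obtain $\|z\|_A=1$, $\langle x,z\rangle_A=0$, and
\begin{equation*}
\langle Tx,y\rangle_A \;=\; q\,\langle Tx,x\rangle_A + \sqrt{1-|q|^{2}}\,\langle Tx,z\rangle_A.
\end{equation*}
Conversely, for any $x$ with $\|x\|_A=1$, Corollary \ref{corl4} applied to the pre-Hilbertian quotient $(\mathcal{H}/\mathcal{N}(A),\langle\cdot,\cdot\rangle_A)$, whose dimension is at least $2$ by the standing assumption $\dim\mathcal{R}(A)\ge 2$, produces such a $z$. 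The boundary case $|q|=1$ reduces immediately to the convexity of $W_A(T)=W_{1,A}(T)$, which is known.

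Next I would describe the ``slices.'' For a fixed $A$-unit $x$, decompose $Tx=\langle Tx,x\rangle_A\,x+u$ with $\langle x,u\rangle_A=0$ and $\|u\|_A=d_A(x):=(\|Tx\|_A^{2}-|\langle Tx,x\rangle_A|^{2})^{1/2}$. Then $\langle Tx,z\rangle_A=\langle u,z\rangle_A$, which is bounded in modulus by $d_A(x)$ via the Cauchy-Schwarz inequality for $\langle\cdot,\cdot\rangle_A$. Whenever $\dim(\mathcal{H}/\mathcal{N}(A))\ge 3$, this bound is attained for every complex value of modulus at most $d_A(x)$ (via Corollary \ref{corl4} applied inside the $A$-orthogonal complement of $x$), which gives
\begin{equation*}
W_{q,A}(T) \;=\; \bigcup_{\|x\|_A=1}\Bigl[\,q\,\langle Tx,x\rangle_A + \sqrt{1-|q|^{2}}\,\{\lambda\in\mathbb{C}:|\lambda|\le d_A(x)\}\Bigr].
\end{equation*}

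To prove convexity, given $\zeta_1,\zeta_2\in W_{q,A}(T)$ with representatives $(x_j,z_j)$ on the ``$A$-Stiefel set'' $St_A:=\{(u,v)\in\mathcal{H}^{2}:\|u\|_A=\|v\|_A=1,\,\langle u,v\rangle_A=0\}$, I would construct a continuous path $s\mapsto(x(s),z(s))\in St_A$ joining $(x_1,z_1)$ to $(x_2,z_2)$; path-connectedness of $St_A$ follows from an $A$-Gram-Schmidt argument together with Corollary \ref{corl4}. The phase freedom $z(s)\mapsto e^{i\theta}z(s)$ then yields a continuous two-parameter family
\begin{equation*}
\gamma(s,\theta)\;=\;q\,\langle Tx(s),x(s)\rangle_A + \sqrt{1-|q|^{2}}\,e^{-i\theta}\,\langle Tx(s),z(s)\rangle_A\;\in\;W_{q,A}(T),
\end{equation*}
and a Toeplitz-Hausdorff-type continuous-deformation argument shows that the image of $\gamma$ covers the full segment $[\zeta_1,\zeta_2]$.

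The hardest step is this last one: verifying that $\gamma$ really sweeps the chord $[\zeta_1,\zeta_2]$ rather than merely a homotopically equivalent curve between its endpoints. This is the technical heart of Tsing's original proof and relies on a careful coordination of the path $(x(s),z(s))$ with the phase $\theta(s)$. A secondary subtlety is the borderline case $\dim(\mathcal{H}/\mathcal{N}(A))=2$, in which each slice degenerates to a circle (or a point); here I would descend to the $2$-dimensional pre-Hilbert quotient and invoke the classical convexity of $W_q$ for a $2\times 2$ matrix, which is an explicit elliptical disk.
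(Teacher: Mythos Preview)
Your proposal overlooks a phenomenon that the paper's proof is organized around: for a general $T\in\mathcal{B}(\mathcal{H})$ the value $\langle Tx,y\rangle_A$ does \emph{not} depend only on the classes of $x,y$ in $\mathcal{H}/\mathcal{N}(A)$. If $n\in\mathcal{N}(A)$ and $Tn\notin\mathcal{N}(A)$, then $\langle T(x+\xi n),y\rangle_A=\langle Tx,y\rangle_A+\xi\,\langle Tn,y\rangle_A$ varies with $\xi$, so there is no ``compressed'' operator on the quotient, and your plan for the borderline case $\dim(\mathcal{H}/\mathcal{N}(A))=2$ (``descend to the $2$-dimensional pre-Hilbert quotient and invoke the classical convexity of $W_q$ for a $2\times2$ matrix'') does not make sense there. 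More broadly, on $\{x:\|x\|_A=1\}$ the centre $q\langle Tx,x\rangle_A$ and the radius $\sqrt{1-|q|^{2}}\,d_A(x)$ of your slices are unbounded in this situation, so the path/deformation scheme would require additional control you have not supplied. The paper is built precisely on this dichotomy: when $T(\mathcal{N}(A))\not\subseteq\mathcal{N}(A)$ it shows directly that $W_{q,A}(T)=\mathbb{C}$; only when $T(\mathcal{N}(A))\subseteq\mathcal{N}(A)$ can one legitimately pass to $\overline{\mathcal{R}(A)}$, where $\langle\cdot,\cdot\rangle_{A}$ is a genuine inner product.

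Even in that good case the paper's route is quite different from, and much shorter than, yours. Rather than re-running Tsing's argument inside a pre-Hilbert space (with the ``hardest step'' you yourself flag left open), it takes any two elements of $W_{q,A'}(T')$, lets $F$ be the finite-dimensional span of the four representing vectors, uses the projection $\mathcal{Q}_F$ of Lemma~\ref{lem2} to form the compression $\mathbf{T}''=\mathcal{Q}_FT'\mathcal{Q}_F$, observes that $W_{q,A'}(\mathbf{T}'')\subseteq W_{q,A'}(T')$ and that both chosen points already lie in $W_{q,A'}(\mathbf{T}'')$, and then simply \emph{cites} Tsing's theorem on the honest finite-dimensional Hilbert space $(F,\langle\cdot,\cdot\rangle_{A'})$. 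This delivers the whole segment at once, with no separate $\dim=2$ analysis and no deformation argument at all.
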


\begin{proof}
Without loss of generality, we may assume that $0 \leq q\leq 1$. First of all, it should be noted that each vectors $x, y\in \mathcal{H}$ can be uniquely written as $x=x_1+x_2$ and $y=y_1+y_2$, where $x_1, y_1\in \mathcal{N}(A)$ and $x_2, y_2 \in \overline{\mathcal{R}(A)}$, since   $\mathcal{H}=\mathcal{N}(A) \oplus \overline{\mathcal{R}(A)}$.   By using this decomposition and the fact that $\mathcal{N}(A)=\mathcal{N}(A^{\frac{1}{2}})$, we can conclude that $(x,y) \in \mathbb{S}_{q, A}(\mathcal{H})$ if and only if $(x_2, y_2) \in \mathbb{S}_{q, A}\left(\overline{\mathcal{R}(A)}\right)$, moreover
\begin{align*}
&W_{q,A}(T)=\left \{ \langle Tx_1+Tx_2, y_2 \rangle_A : x_{1}\in \mathcal{N}(A), (x_2, y_2) \in \mathbb{S}_{q, A}\left(\overline{\mathcal{R}(A)}\right)\right\}.
\end{align*}
From this we have two possible cases:
\begin{enumerate}
\item[\textbf{Case 1:}] If $T(\mathcal{N}(A))\subseteq \mathcal{N}(A)$. In this case, we have
\begin{align}\label{equ6}
&W_{q,A}(T)=\left\{ \langle PTx_2, y_2 \rangle_{A}: (x_2,y_2)  \in \mathbb{S}_{q, A}\left(\overline{\mathcal{R}(A)}\right) \right\} =W_{q,A^{\prime}}(T^{\prime}),
\end{align}
where $A^{\prime}=A\mid_{\overline{\mathcal{R}(A)}}$ and $T^{\prime}=PT\mid_{\overline{\mathcal{R}(A)}}$. From the  equality \eqref{equ6}, it suffices to show that $W_{q,A^{\prime}}(T^{\prime})$ is convex. Before we begin our demonstration, it is important to note that  the space $\left(\overline{\mathcal{R}(A)}, \langle \cdot, \cdot \rangle_{A^{\prime}}\right)$ is pre-Hilbertien, since $A^{\prime} \in \mathcal{B}\left(\overline{\mathcal{R}(A)}\right)^{+}$ is injective. Now, let $x_{2}, y_{2}, f_{2}, g_{2} \in \overline{\mathcal{R}(A)}$ such that  $\Vert x_{2} \Vert_{A^{\prime}}=\Vert y_{2} \Vert_{A^{\prime}} = \Vert f_{2} \Vert_{A^{\prime}}=\Vert g_{2} \Vert_{A^{\prime}} =1$ and $\langle x_{2}, y_{2}\rangle_{A^{\prime}}= \langle f_{2}, g_{2} \rangle_{A^{\prime}}=q$. We will show that  the segment  joining  $\langle T^{\prime}x_{2}, y_{2}\rangle_{A^{\prime}}$ and  $\langle T^{\prime}f_{2}, g_{2} \rangle_{A^{\prime}}$ is contained in $W_{q,A^{\prime}}(T^{\prime})$. For this, let $F$ be the subspace spanned by the family $\{ x_{2}, y_{2}, f_{2}, g_{2}\}$. By virtue of  Lemma \ref{lem2}, there exists an orthogonal projection  $\mathcal{Q}_{F}$ of $\overline{\mathcal{R}(A)}$ on $F$ relative to   $\langle \cdot, \cdot \rangle_{A^{\prime}}$, since   the space $F$ has finite dimension. A simple calculation shows that the operator $\mathbf{T}''=\mathcal{Q}_{F}T^{\prime} \mathcal{Q}_{F}$ is bounded on $F$, moreover  $\langle \mathbf{T}''x_{2}, y_{2}\rangle_{A^{\prime}}=\langle T^{\prime}x_{2}, y_{2}\rangle_{A^{\prime}}$ and $\langle \mathbf{T}'' f_{2}, g_{2}\rangle_{A^{\prime}}=\langle T^{\prime}f_{2}, g_{2}\rangle_{A^{\prime}}$. We can also easily verify the following inclusion  $$W_{q, A^{\prime}}(\mathbf{T}'' )\subseteq W_{q, A^{\prime}}(T^{\prime}),$$ where
$$W_{q, A^{\prime}}(\mathbf{T}'')=\left\{ \langle \mathbf{T}''u, v\rangle_{A^{\prime}} : (u,v) \in \mathbb{S}_{q, A^{\prime}}(F) \right\}.$$
Hence, according to Tsing's theorem \cite{r.tsing} and  the fact that   $\left(F, \langle \cdot, \cdot \rangle_{A^{\prime}}\right)$ is a Hilbert space, we can conclude    that $W_{q, A^{\prime}}(\mathbf{T}'')$ is convex, and consequently  $W_{q,A^{\prime}}(T^{\prime})$ contains the segment joining $\langle T^{\prime}x_{2}, y_{2}\rangle_{A^{\prime}}$ and  $\langle T^{\prime}f_{2}, g_{2} \rangle_{A^{\prime}}$.  Therefore, $W_{q,A}(T)$ is convex.  
\item[\textbf{Case 2:}] If  $T(\mathcal{N}(A))\not\subseteq \mathcal{N}(A)$, then there exists $x_{1}\in \mathcal{N}(A)$ such that $Tx_1\not \in \mathcal{N}(A)$. This implies that $\Vert Tx_1 \Vert_{A} \neq 0$. By using   the decomposition  $\mathcal{H}=\mathcal{N}(A) \oplus \overline{\mathcal{R}(A)}$, we can infer that $\Vert Tx_1 \Vert_{A}=\Vert PTx_1 \Vert_{A}$. In particular, we get that  $\Vert PTx_1 \Vert_{A} \neq 0$. Put $y_{2}=\frac{PTx_1}{\Vert PTx_1 \Vert_{A}}$. Clearly $y_{2} \in \overline{\mathcal{R}(A)}\setminus \{ 0 \}$, so by applying Corollary \ref{corl4} to the pre-Hilbertien  space  $\left(\overline{\mathcal{R}(A)}, \langle \cdot, \cdot \rangle_{A}\right)$, we can conclude that there exists $w\in  \overline{\mathcal{R}(A)}$ such that $\Vert w \Vert_{A}=1$ and  $\langle y_{2}, w \rangle_{A}=0$. By setting $x_{2}=qy_2+\sqrt{1-q^{2}}w$, we obtain that $(x_2, y_2) \in \mathbb{S}_{q, A}(\overline{\mathcal{R}(A)})$. From the equality \eqref{equ6}, it follows that
\begin{align*}
W_{q,A}(T)&\supseteq  \left \{ \langle T(\xi x_1), y_{2} \rangle_A + \langle Tx_2, y_2 \rangle_A :  \xi \in \mathbb{C} \right \} \\
&=  \left \{  \xi \left \langle Tx_1, \frac{PTx_1}{\Vert PTx_1 \Vert_{A}} \right \rangle_A + \langle Tx_2, y_2 \rangle_A :  \xi \in \mathbb{C} \right \} \\
&= \left \{  \xi \Vert PTx_1 \Vert_{A}  + \langle Tx_2, y_2 \rangle_A :  \xi \in \mathbb{C} \right \} \\
&=\mathbb{C}.
\end{align*}
Consequently, in this case, we obtain that $W_{q,A}(T)=\mathbb{C}$ which is convex.

\end{enumerate}

\end{proof}

\begin{remark}
It is widely known that if $\mathcal{H}$ is finite dimensional, then $W_{q}(T)$ is compact. However, this result does not hold in general for $W_{q,A}(T)$. Indeed, for an operator $T\in \mathcal{B(H)}$ such that $T(\mathcal{N}(A))\not\subseteq \mathcal{N}(A)$, it follows from the proof of Theorem \ref{thm21prin} that $W_{q,A}(T)=\mathbb{C}.$
\end{remark}

\begin{proposition}\label{prop23prin}
We assume that $\mathcal{H}$  is finite dimensional. If $A$ is invertible, then  $\mathbb{S}_{q, A}(\mathcal{H})$ is compact.
\end{proposition}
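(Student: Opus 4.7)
The plan is to prove compactness by showing $\mathbb{S}_{q,A}(\mathcal{H})$ is both closed and bounded in the finite-dimensional space $\mathcal{H}\times\mathcal{H}$, where compactness reduces to the Heine--Borel criterion. The key observation is that when $A$ is invertible on a finite-dimensional space, $A$ is positive definite, so $\|\cdot\|_A$ is an actual norm, and on a finite-dimensional space any two norms are equivalent. Thus there exist constants $c,C>0$ with $c\|x\|\le\|x\|_A\le C\|x\|$ for every $x\in\mathcal{H}$.

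First I would handle boundedness. For any $(x,y)\in\mathbb{S}_{q,A}(\mathcal{H})$, we have $\|x\|_A=\|y\|_A=1$, so by the equivalence of norms $\|x\|\le 1/c$ and $\|y\|\le 1/c$. This shows $\mathbb{S}_{q,A}(\mathcal{H})$ is contained in a ball of radius $\sqrt{2}/c$ in $\mathcal{H}\times\mathcal{H}$, hence bounded.

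Next I would handle closedness by exhibiting $\mathbb{S}_{q,A}(\mathcal{H})$ as the preimage of a closed set under a continuous map. Define
\begin{equation*}
\Phi:\mathcal{H}\times\mathcal{H}\longrightarrow\mathbb{R}\times\mathbb{R}\times\mathbb{C},\qquad \Phi(x,y)=\bigl(\|x\|_A,\,\|y\|_A,\,\langle x,y\rangle_A\bigr).
\end{equation*}
The sesquilinear form $\langle\cdot,\cdot\rangle_A=\langle A\cdot,\cdot\rangle$ is continuous because $A$ is bounded, and $\|\cdot\|_A$ is continuous as a norm. Thus $\Phi$ is continuous, and $\mathbb{S}_{q,A}(\mathcal{H})=\Phi^{-1}\bigl(\{(1,1,q)\}\bigr)$ is the preimage of a singleton, which is closed.

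Combining boundedness and closedness in the finite-dimensional normed space $\mathcal{H}\times\mathcal{H}$ gives compactness by Heine--Borel. There is no real obstacle here; the only subtlety is the use of invertibility of $A$ to ensure $\|\cdot\|_A$ is a genuine norm (so that $\|x\|_A=1$ forces a bound on $\|x\|$), which fails if $A$ has nontrivial kernel since then unit $\|\cdot\|_A$-vectors can be unbounded in the original norm.
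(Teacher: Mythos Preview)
Your proof is correct and follows essentially the same idea as the paper's. The paper argues that $\mathbb{S}_{q,A}(\mathcal{H})$ is a closed subset of the product $\mathbb{S}^{A}(0,1)\times\mathbb{S}^{A}(0,1)$ and then cites an external result (\cite[Proposition~2.2]{r.bkos}) for the compactness of the $A$-unit sphere $\mathbb{S}^{A}(0,1)$ when $A$ is invertible; your version unpacks that citation by invoking norm equivalence and Heine--Borel directly, making the argument self-contained.
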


\begin{proof}
We can easily check that    $\mathbb{S}_{q, A}(\mathcal{H})$ is a closed subset of  $\mathbb{S}^{A}(0,1) \times \mathbb{S}^{A}(0,1)$,  where  $\mathbb{S}^{A}(0,1)$ is the   $A$-unit sphere given by 
\begin{equation*}
\mathbb{S}^{A}(0,1)=\{ x\in \mathcal{H} : \Vert x \Vert_{A}=1 \}.
\end{equation*}
On the other hand,   according to \cite[Proposition 2.2]{r.bkos}, the set   $\mathbb{S}^{A}(0,1)$ is compact, since $A$ is invertible.  Hence,  $\mathbb{S}_{q, A}(\mathcal{H})$ is also compact.

\end{proof}

\begin{theorem}
We assume that $\mathcal{H}$  is finite dimensional.  Let $T\in \mathcal{B(H)}$ such that $T(\mathcal{N}(A)) \subseteq \mathcal{N}(A),$ then   $W_{q,A}(T)$ is a compact subset of $\mathbb{C}.$
\end{theorem}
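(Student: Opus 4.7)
The plan is to reduce the problem to the situation handled in Case 1 of the proof of Theorem \ref{thm21prin} and then invoke Proposition \ref{prop23prin} together with a continuity argument.

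First, I would write $\mathcal{H}=\mathcal{N}(A)\oplus\overline{\mathcal{R}(A)}$; since $\mathcal{H}$ is finite dimensional, $\overline{\mathcal{R}(A)}=\mathcal{R}(A)$ is a closed subspace of $\mathcal{H}$. Let $P$ denote the orthogonal projection onto $\overline{\mathcal{R}(A)}$, set $A^{\prime}=A\mid_{\overline{\mathcal{R}(A)}}$ and $T^{\prime}=PT\mid_{\overline{\mathcal{R}(A)}}$. Because $T(\mathcal{N}(A))\subseteq\mathcal{N}(A)$, the computation performed in Case 1 of Theorem \ref{thm21prin} applies verbatim and yields
\begin{equation*}
W_{q,A}(T)=W_{q,A^{\prime}}(T^{\prime}).
\end{equation*}

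Next, I would observe that $A^{\prime}\in\mathcal{B}\bigl(\overline{\mathcal{R}(A)}\bigr)^{+}$ is injective (it is the restriction of $A$ to the orthogonal complement of its kernel) and therefore, since $\overline{\mathcal{R}(A)}$ is finite dimensional, $A^{\prime}$ is invertible on $\overline{\mathcal{R}(A)}$. Proposition \ref{prop23prin} then applies with $\mathcal{H}$ replaced by $\overline{\mathcal{R}(A)}$ and $A$ replaced by $A^{\prime}$, so that $\mathbb{S}_{q,A^{\prime}}\!\bigl(\overline{\mathcal{R}(A)}\bigr)$ is a compact subset of $\overline{\mathcal{R}(A)}\times\overline{\mathcal{R}(A)}$.

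Finally, I would note that the map
\begin{equation*}
\Phi:\overline{\mathcal{R}(A)}\times\overline{\mathcal{R}(A)}\longrightarrow\mathbb{C},\qquad (x_{2},y_{2})\longmapsto \langle T^{\prime}x_{2},y_{2}\rangle_{A^{\prime}}
\end{equation*}
is continuous (it is a bounded sesquilinear form composed with the bounded operator $T^{\prime}$ on a finite-dimensional space). Hence
\begin{equation*}
W_{q,A}(T)=W_{q,A^{\prime}}(T^{\prime})=\Phi\!\left(\mathbb{S}_{q,A^{\prime}}\!\bigl(\overline{\mathcal{R}(A)}\bigr)\right)
\end{equation*}
is the image of a compact set under a continuous map, and is therefore a compact subset of $\mathbb{C}$. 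I do not expect any genuine obstacle here; the only delicate point is making sure that the invariance hypothesis $T(\mathcal{N}(A))\subseteq\mathcal{N}(A)$ really places us in Case 1 of Theorem \ref{thm21prin}, so that the reduction to $(A^{\prime},T^{\prime})$ on $\overline{\mathcal{R}(A)}$ with $A^{\prime}$ invertible is legitimate.
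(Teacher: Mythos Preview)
Your proposal is correct and follows essentially the same route as the paper: reduce to $W_{q,A^{\prime}}(T^{\prime})$ on $\overline{\mathcal{R}(A)}$ via Case~1 of Theorem~\ref{thm21prin}, then use Proposition~\ref{prop23prin} (with $A^{\prime}$ invertible on the finite-dimensional space $\overline{\mathcal{R}(A)}$) to get compactness of $\mathbb{S}_{q,A^{\prime}}(\overline{\mathcal{R}(A)})$ and conclude by continuity of $\Phi$. The paper's proof is slightly terser in that it does not spell out the continuity step explicitly, but the argument is the same.
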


\begin{proof}
From the proof of Theorem \ref{thm21prin}, we can conclude  that  
\begin{equation}\label{equa6prin}
W_{q,A}(T)=\left\{ \langle T^{\prime}x_2, y_2 \rangle_{A}: (x_2,y_2)  \in \mathbb{S}_{q, A^{\prime}}\left(\overline{\mathcal{R}(A)}\right) \right\} ,
\end{equation}
 where $A^{\prime}=A\mid_{\overline{\mathcal{R}(A)}}$ and $T^{\prime}=PT\mid_{\overline{\mathcal{R}(A)}}$. On the other hand,  since  $\overline{\mathcal{R}(A)}$ is finite  dimensional and the positive operator $A^{\prime} $ is invertible, then it follows from  Proposition \ref{prop23prin} that  $\mathbb{S}_{q, A^{\prime}}\left(\overline{\mathcal{R}(A)}\right)$ is compact. Thus, according to \eqref{equa6prin}, we infer that  $W_{q,A}(T)$ is compact.
\end{proof}

We introduce now the following quantity.

\begin{definition}
Let $T\in \mathcal{B(H)}$. The $q$-$A$-numerical radius of $T$ is given by
\begin{equation*}
w_{q,A}(T)=\sup\left \{ \vert \xi \vert :  \xi \in  W_{q,A}(T) \right\}.
\end{equation*}
\end{definition}

A direct application of the Cauchy-Schwarz inequality gives $w_{q, A}(T) <+ \infty$ for every  $T\in \mathcal{B}_{A^{\frac{1}{2}}}(\mathcal{H}).$ On the other hand, we can easily show that   $w_{q, A}(\cdot)$ is a semi-norm on $ \mathcal{B}_{A^{\frac{1}{2}}}(\mathcal{H})$. In particular, it   satisfies the triangle inequality:
\begin{equation*}
w_{q,A}(T+S) \leq w_{q,A}(T) + w_{q,A}(S) \ \ (T,S \in \mathcal{B}_{A^{\frac{1}{2}}}(\mathcal{H})).
\end{equation*}

 It is generally interesting to study the equality case. In the next result, we provide a characterisation of the equality $ w_{q,A}(T+S) = w_{q,A}(T) + w_{q,A}(S)$  to  hold in $ \mathcal{B}_{A^{\frac{1}{2}}}(\mathcal{H})$.
\begin{theorem}
Let $T,S \in \mathcal{B}_{A^{\frac{1}{2}}}(\mathcal{H}).$ The following statements are equivalent: 
\begin{enumerate}
\item $w_{q,A}(T+S)=w_{q,A}(T)+w_{q,A}(S);$
\item there exists a sequence $\{ (x_n, y_n) \}$ of elements of $\mathbb{S}_{q,A}(\mathcal{H})$ such that 
\begin{equation*}
\lim_{n\to +\infty} \Re \Big( \langle y_n, Tx_n \rangle_{A}  \langle Sx_n, y_n  \rangle_{A} \Big ) = w_{q,A}(T)  w_{q,A}(S).
\end{equation*}
Here, $\Re(z)$ denotes the real part of any $z \in \mathbb{C}.$
\end{enumerate}
\end{theorem}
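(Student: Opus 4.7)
The strategy is to extract everything from the pointwise expansion
\begin{equation*}
|\langle(T+S)x,y\rangle_A|^2 = |\langle Tx,y\rangle_A|^2+|\langle Sx,y\rangle_A|^2+2\,\Re\!\bigl(\langle y,Tx\rangle_A\langle Sx,y\rangle_A\bigr),
\end{equation*}
valid for every $(x,y)\in\mathbb{S}_{q,A}(\mathcal{H})$; this follows by writing $\overline{\langle Tx,y\rangle_A}=\langle y,Tx\rangle_A$ and expanding the modulus squared. For any sequence $\{(x_n,y_n)\}\subset\mathbb{S}_{q,A}(\mathcal{H})$, abbreviate $a_n=|\langle Tx_n,y_n\rangle_A|$, $b_n=|\langle Sx_n,y_n\rangle_A|$, and $c_n=\Re\!\bigl(\langle y_n,Tx_n\rangle_A\langle Sx_n,y_n\rangle_A\bigr)$, so that the identity reads $|\langle(T+S)x_n,y_n\rangle_A|^2=a_n^2+b_n^2+2c_n$; Cauchy--Schwarz in $\mathbb{C}$ yields $c_n\le a_nb_n$, while $a_n\le w_{q,A}(T)$ and $b_n\le w_{q,A}(S)$ by definition.

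For $(1)\Rightarrow(2)$, I would pick $\{(x_n,y_n)\}\subset\mathbb{S}_{q,A}(\mathcal{H})$ with $|\langle(T+S)x_n,y_n\rangle_A|\to w_{q,A}(T+S)$. The identity together with the bound $c_n\le a_nb_n$ gives
\begin{equation*}
|\langle(T+S)x_n,y_n\rangle_A|\le a_n+b_n\le w_{q,A}(T)+w_{q,A}(S),
\end{equation*}
so under hypothesis $(1)$ every inequality becomes asymptotic equality. This forces $a_n+b_n\to w_{q,A}(T)+w_{q,A}(S)$, and combined with the upper bounds it forces $a_n\to w_{q,A}(T)$ and $b_n\to w_{q,A}(S)$; substituting back into $a_n^2+b_n^2+2c_n$ shows $c_n\to w_{q,A}(T)w_{q,A}(S)$, which is $(2)$.

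For $(2)\Rightarrow(1)$, given the prescribed sequence, the squeeze $c_n\le a_nb_n\le w_{q,A}(T)w_{q,A}(S)$ together with $c_n\to w_{q,A}(T)w_{q,A}(S)$ delivers $a_nb_n\to w_{q,A}(T)w_{q,A}(S)$. When both radii are positive, the inequality $a_n w_{q,A}(S)\ge a_nb_n$ yields $\liminf a_n\ge w_{q,A}(T)$, which combined with $a_n\le w_{q,A}(T)$ gives $a_n\to w_{q,A}(T)$, and symmetrically $b_n\to w_{q,A}(S)$. Plugging into the identity produces $|\langle(T+S)x_n,y_n\rangle_A|^2\to(w_{q,A}(T)+w_{q,A}(S))^2$, whence $w_{q,A}(T+S)\ge w_{q,A}(T)+w_{q,A}(S)$, and the triangle inequality for $w_{q,A}(\cdot)$ closes the chain.

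The only real nuisance I foresee is the degenerate case where one of the radii vanishes, say $w_{q,A}(S)=0$. Then $b_n=0$ and $c_n=0$ for every admissible sequence, so $(2)$ holds trivially (the target product is $0$), while applying the triangle inequality both to $T+S$ and to $T=(T+S)+(-S)$ forces $w_{q,A}(T+S)=w_{q,A}(T)$, i.e.\ $(1)$. With this case handled separately, the positive-radii argument above settles the equivalence.
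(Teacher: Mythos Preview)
Your proof is correct and follows essentially the same approach as the paper: both directions hinge on the expansion $|\langle(T+S)x,y\rangle_A|^2=a^2+b^2+2c$ together with the bounds $a\le w_{q,A}(T)$, $b\le w_{q,A}(S)$, $c\le ab$, and a squeeze. Your treatment is in fact slightly more careful than the paper's, which omits the degenerate case $w_{q,A}(S)=0$ in the $(2)\Rightarrow(1)$ direction.
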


\begin{proof}
$(1)\Rightarrow (2).$ By the hypothesis, there exists a sequence $\{ (x_n, y_n) \}$ of elements of $\mathbb{S}_{q,A}(\mathcal{H})$ such that 
\begin{equation*}
\lim_{n\to +\infty} \Big\vert \langle Tx_n, y_n \rangle_{A} + \langle Sx_n, y_n  \rangle_{A} \Big\vert^{2} =\Big(w_{q,A}(T) + w_{q,A}(S)\Big)^{2}. 
\end{equation*}
For each $n\in \mathbb{N},$ we have
\begin{align*}
\Big\vert \langle Tx_n, y_n \rangle_{A} + \langle Sx_n, y_n  \rangle_{A} \Big\vert^{2} & = \Big\vert \langle Tx_n, y_n \rangle_{A} \Big\vert^{2}  + \Big\vert \langle Sx_n, y_n  \rangle_{A} \Big\vert^{2}  \\ 
& \hspace{2cm} +2 \Re\Big(\overline{ \langle Tx_n, y_n \rangle_{A} } \langle Sx_n, y_n  \rangle_{A} \Big ) \\
&=  \Big\vert \langle Tx_n, y_n \rangle_{A} \Big\vert^{2}  + \Big\vert \langle Sx_n, y_n  \rangle_{A} \Big\vert^{2}  \\ 
& \hspace{2cm} +2 \Re\Big( \langle y_n, Tx_n \rangle_{A}  \langle Sx_n, y_n  \rangle_{A} \Big ) \\
& \leq w_{q,A}^{2}(T) + w_{q,A}^{2}(S) + 2  w_{q,A}(T)  w_{q,A}(S) \\
&=  \Big(w_{q,A}(T) + w_{q,A}(S)\Big)^{2}.
\end{align*}
Then, taking limits when $n \to +\infty,$ we conclude that
\begin{equation*}
\lim_{n\to +\infty} \Re \Big( \langle y_n, Tx_n \rangle_{A}  \langle Sx_n, y_n  \rangle_{A} \Big ) = w_{q,A}(T)  w_{q,A}(S).
\end{equation*}
$(2) \Rightarrow (1).$ Pick a sequence $\{ (x_n, y_n) \}$ of elements of $\mathbb{S}_{q, A}(\mathcal{H})$ such that 
\begin{equation*}
\lim_{n\to +\infty} \Re \Big( \langle y_n, Tx_n \rangle_{A}  \langle Sx_n, y_n  \rangle_{A} \Big ) = w_{q,A}(T)  w_{q,A}(S).
\end{equation*}
We know that  
\begin{align*}
\Re \Big( \langle y_n, Tx_n \rangle_{A}  \langle Sx_n, y_n  \rangle_{A} \Big ) \leq  \Big\vert \langle Tx_n, y_n \rangle_{A} \Big\vert w_{q,A}(S) \leq w_{q,A}(T)  w_{q,A}(S),
\end{align*}
 whence $\displaystyle \lim_{n\to +\infty}  \Big \vert \langle  Tx_n, y_n \rangle_{A} \Big\vert =  w_{q,A}(T).$  By a similar argument, we infer that $\displaystyle \lim_{n\to +\infty}  \Big \vert \langle  Sx_n, y_n \rangle_{A} \Big\vert =  w_{q,A}(S).$ Using the fact that 
 \begin{align*}
 \Big\vert \langle Tx_n, y_n \rangle_{A} + \langle Sx_n, y_n  \rangle_{A} \Big\vert^{2} &= \Big\vert \langle Tx_n, y_n \rangle_{A} \Big\vert^{2}  + \Big\vert \langle Sx_n, y_n  \rangle_{A} \Big\vert^{2}
 \end{align*}
  and the fact that 
\begin{align*}
\lim_{n\to +\infty} \Re \Big( \langle y_n, Tx_n \rangle_{A}  \langle Sx_n, y_n  \rangle_{A} \Big ) = w_{q,A}(T)  w_{q,A}(S),
\end{align*}  
  we obtain  $w_{q,A}(T+S)=w_{q,A}(T)+w_{q,A}(S).$
\end{proof}

\section{Conclusion}

In conclusion, this paper successfully introduces and investigates the concept of the $q$-numerical range for tuples of operators in Hilbert spaces. The establishment of various inequalities for the $q$-numerical radius associated with operator tuples enhances our understanding of this concept. Furthermore, the definition of the $q$-numerical range for operators in semi-Hilbert spaces and the proof of its convexity contribute to the development of this field.

This work serves as a starting point for further research. In particular, it opens up possibilities for studying the concept of $q$-joint numerical range and radius in the context of semi-Hilbert spaces. Exploring these extensions could provide deeper insights into the behavior and properties of operators in this setting.

\end{document}